\pgfplotsset{compat=1.17}
\newtheorem{thm}{Theorem}
\newtheorem{xthm}{Theorem}
\newtheorem{prop}{Proposition}[section]
\newtheorem{lema}{Lemma}[section]
\newtheorem{defn}{Definition}[section]
\newtheorem{rmk}{Remark}[section]
\newcommand{\floork}{\left[\frac{k-1}{2}\right]}
\newcommand{\floort}{\left[\frac{\tau_2-1}{2}\right]}
\newcommand{\rr}{\mathbb{R}}
\newcommand{\zz}{\mathbb{Z}}
\newcommand{\nn}{\mathbb{N}}
\newcommand{\TT}{\mathbb{T}}
\begin{document}
\begin{center}
    \Large{Existence of robust non-uniformly hyperbolic endomorphism in homotopy classes}\\
    \vspace{0.5cm}
    \large{Victor Janeiro}\\
\end{center}
\begin{center}
    \href{mailto:victorgjaneiro@gmail.com}{victorgjaneiro@gmail.com}, ICEx-UFMG, Belo Horizonte-MG, Brazil. 
\end{center}

\begin{abstract}
    We extend the results of \protect\cite{NUH} by showing that any homothety in $\TT^2$ is homotopic to a non-uniformly hyperbolic ergodic area preserving map, provided that its degree is at least $5^2$. We also address other small topological degree cases not considered in the previous article. This proves the existence of a $\mathcal C^1$ open set of non-uniformly hyperbolic systems, that intersects essentially every homotopy classes in $\mathbb T^2$, where the Lyapunov exponents vary continuously. 
\end{abstract}

\section{Introduction}

We study conservative maps of the two-torus $\TT^2$ from the point of view of smooth ergodic theory. We are interested in the Lyapunov exponents of these systems, in particular, in extending the results obtained in \protect\cite{NUH} to the homothety case and some cases with lower topological degree, which were not included in the previous results. With this in mind, some familiarity with the results of \protect\cite{NUH} is desirable. 

For a differentiable covering map $f: \TT^2 \to \TT^2$ and a pair $(x,v) \in T\TT^2$, the number
\[
\Tilde{\mathcal{X}}(x,v) = \limsup\limits_{n \to  \infty} \frac{\log\| D_xf^n(v)\|}{n}
\]
is the Lyapunov exponent of $f$ at $(x,v)$. See \protect\cite{barreira2013introduction} for background in Smooth Ergodic Theory. Due to Oseledet's Theorem \protect\cite{oseledets1968multiplicative} , there is a full area set $M_0$ on $\TT^2$ where the previous limit exists for every $v$, and there exists a measurable bundle $E^-$ defined on $M_0$ such that for $x \in M_0$, $v \neq 0 \in  E^-(x)$:
\[
\mathcal{X}(x,v) := \lim\limits_{n \to \infty}\frac{\log \|D_xf^n(v)\|}{n} = \lim\limits_{n \to \infty}\frac{\log m(D_xf^n)}{n} := \mathcal{X}^-(x),
\]
while for $v \in \rr^2 \setminus E^-(x)$:
\[
\mathcal{X}(x,v) = \lim\limits_{n \to \infty}\frac{\log \|D_xf^n\|}{n} := \mathcal{X}^+(x),
\]
Moreover, if $\mu$ denotes the Lebesgue (Haar) measure on $\TT^2$, then: 
\begin{equation}
    \int(\mathcal{X}^+(x) + \mathcal{X}^-(x)) d\mu(x) = \int \log |\det D_xf| d\mu(x) > 0,
\end{equation}
so $\mathcal{X}^+(x)>0$ almost everywhere. At last, we say that $f$ is non-uniformly hyperbolic (NUH) if $\mathcal{X}^-(x) < 0 < \mathcal{X}^+(x)$ almost everywhere.

Non uniformly hyperbolic systems provide a generalization of the classical Anosov surface maps \protect\cite{Anosov1969GeodesicFO}. Here, we will only be concerned with the non-invertible case in an attempt to aid the understanding of their statistical properties, which is still  under development. For the general ergodic theory of endomorphisms, the reader is directed to \protect\cite{Qian2009smooth}.

Any map $f: \TT^2 \to \TT^2$ is homotopic to a linear endomorphism $E: \TT^2 \to \TT^2$, induced by an integer matrix that we denote  by the same letter. In \protect\cite{NUH}, it is established the existence of a $\mathcal{C}^1$ open set of non-uniformly hyperbolic systems that intersects every homotopy class that does not contain a homothety, provided that the degree is not too small. The authors then conjecture that the same is true for homotheties. In this article, we prove this conjecture, provided that the degree is at least $5^2$. There are other low topological degree cases not covered by Andersson, Carrasco and Saghin, which we also address here.

Let $\text{End}^r_{\mu}(\TT^2)$ be the set of $\mathcal{C}^r$ local diffeomorphisms of $\TT^2$ preserving the Lebesgue measure $\mu$, that are not invertible. For $f \in \text{End}^r_{\mu}(\TT^2)$, $(x,v) \in T^1\TT^2$ define:
\[
I(x,v;f^n) = \sum\limits_{y \in  f^{-n}(x)}\frac{\log \| (D_yf^n)^{-1}v \|}{\det (D_yf^n)},
\]
and
\[
C_{\mathcal{X}}(f) = \sum\limits_{n \in \nn} \frac{1}{n} \inf\limits_{(x,v) \in T^1\TT^2} I(x,v;f^n).
\]
Define the set 
\[
\mathcal{U}:= \{f \in \text{End}^r_{\mu}(\TT^2) \ : \  C_{\mathcal{X}}(f) > 0\},
\]
which is open in the $\mathcal{C}^1$-topology. On Subsection 2.3 of the main reference \protect\cite{NUH}, it is proved:

\begin{xthm}\label{lema funções em U são NUH}
If $f \in \mathcal{U}$, then $f$ is non-uniformly hyperbolic.
\end{xthm}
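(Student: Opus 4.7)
The claim is that the open condition $C_{\mathcal{X}}(f)>0$ implies $\mathcal{X}^-(x)<0$ for $\mu$-a.e.\ $x$; combined with the already-established $\mathcal{X}^+(x)>0$ a.e., this yields non-uniform hyperbolicity. The plan is to read $I(x,v;f^n)$ through the transfer operator, derive an integral bound on the stable Lyapunov exponent, then rule out ergodic components along which $\mathcal{X}^-\geq 0$ by a direct singular-value-decomposition computation.

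The first step is to observe that, since $f$ preserves Lebesgue, the Ruelle transfer operator $\mathcal{L}\phi(x)=\sum_{y\in f^{-1}(x)}\phi(y)/\det D_yf$ is Markov ($\mathcal{L}\mathbf{1}=\mathbf{1}$), so
\[
I(x,v;f^n)=\mathcal{L}^n\big[\log\|(Df^n)^{-1}v\|\big](x)
\]
is a probability-weighted preimage average. Combining the pointwise inequality $\log m(D_yf^n)\leq -\log\|(D_yf^n)^{-1}v\|$ with the adjoint identity $\int\mathcal{L}^n\phi\,d\mu=\int\phi\,d\mu$ gives $\int\log m(D_yf^n)\,d\mu(y)\leq -\inf_{(x,v)\in T^1\TT^2}I(x,v;f^n)$, and the super-additive Kingman theorem applied to the super-additive cocycle $a_n=\log m(Df^n)$ identifies the left-hand side as $\int\mathcal{X}^-\,d\mu=\sup_n\tfrac{1}{n}\int a_n\,d\mu$.

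To convert this integral bound into a pointwise statement, I would pass to an ergodic decomposition and use the $f$-invariance of $\mathcal{X}^-$. Assume for contradiction that some ergodic component $A$ has $\mathcal{X}^-|_A\geq 0$. A singular-value decomposition analysis of $D_yf^n$ shows that its smallest left singular vector $u_2(y)\in T_x$ lies in $E^-(x)$ uniformly in $y\in f^{-n}(x)$ (because $D_yf^n(E^-(y))=E^-(f^ny)$), so choosing $v\in T_x$ perpendicular to $E^-(x)$ gives $\log\|(D_yf^n)^{-1}v\|\sim -n\mathcal{X}^+(x)$ at every preimage, hence $\tfrac{1}{n}\inf_{(x,v)\in T^1\TT^2}I(x,v;f^n)\leq -\mathcal{X}^+|_A+o(1)$ for large $n$. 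This forces $\sum\tfrac{1}{n}\inf I=-\infty$ and contradicts $C_{\mathcal{X}}(f)>0$, so no such $A$ exists and $\mathcal{X}^-<0$ $\mu$-a.e.

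The main obstacle will be the SVD step: one must uniformly control the finite-$n$ deviation of $u_2(y)$ from $E^-(x)$, and of $u_1(y)$ from $(E^-(x))^\perp$, over the exponentially many preimages $y\in f^{-n}(x)$, so that the $y$-dependent lower-order corrections to the leading $-n\mathcal{X}^+(x)$ term remain small after $\mathcal{L}^n$-averaging. These corrections are controlled by the spectral gap $\mathcal{X}^+-\mathcal{X}^-$ on the hypothetical component $A$, which equals $\mathcal{X}^+|_A>0$ under the contradiction hypothesis, so exponential-gap estimates on the convergence of singular vectors are available. Once those estimates are in hand, the remaining ingredients---the transfer-operator identity, super-additive Kingman, and the ergodic decomposition---are routine.
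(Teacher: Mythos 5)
First, note that the paper does not actually prove this statement: it is imported verbatim from Subsection~2.3 of \cite{NUH}. Your first half is sound and matches the standard argument there: $\mathcal{L}\mathbf{1}=\mathbf{1}$, the adjoint identity $\int\mathcal{L}^n\phi\,d\mu=\int\phi\,d\mu$, the pointwise bound $\log\|(D_yf^n)^{-1}v\|\le-\log m(D_yf^n)$, and super-additive Kingman for $\log m(Df^n)$ together give $\tfrac1n\inf_{(x,v)}I(x,v;f^n)\le-\tfrac1n\int\log m(Df^n)\,d\mu\to-\int\mathcal{X}^-\,d\mu$.

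The genuine gap is the SVD step, and it is not a removable technicality. Equivariance $D_yf^n\,E^-(y)=E^-(x)$ only tells you where $(D_yf^n)^{-1}$ sends $E^-(x)$ (namely to $E^-(y)$, with norm $\sim e^{-n\mathcal{X}^-}$); it says nothing about the direction in $T_x\TT^2$ that $(D_yf^n)^{-1}$ contracts at rate $e^{-n\mathcal{X}^+}$. That direction is the normalized image $D_yf^nu_1(y,n)$ of the top singular vector, and it converges to the unstable direction of the natural extension along the prehistory through $y$ --- a genuinely $y$-dependent object. This multivaluedness of the unstable direction over a point is the defining feature of the endomorphism setting, so no single $v\in T_x\TT^2$ can be aligned with the contracted direction of $(D_yf^n)^{-1}$ for all (or even most, in the $1/\det$-weighted sense) preimages $y$. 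Concretely, writing $v=a\,e^-(x)+b\,w_y$ with $w_y$ the fast image direction, one gets $\|(D_yf^n)^{-1}v\|\asymp|a|e^{-n\mathcal{X}^-}$ whenever $a\ne0$; under your hypothesis $\mathcal{X}^-|_A\ge0$ this yields $I(x,v;f^n)\lesssim0$, not $\lesssim-n\mathcal{X}^+$. Your claim $u_2(y,n)\to E^-(x)$ uniformly in $y$ also fails: if $v_2(y,n)=e^-(y)+\epsilon_nw$, then $D_yf^nv_2(y,n)$ has an $E^-(x)$-component and a fast component of \emph{comparable} size $e^{n\mathcal{X}^-}$, so $u_2(y,n)$ stays at bounded angle from $E^-(x)$ and its limit depends on the prehistory.

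Fortunately the weaker, trivially available bound suffices once you add the ingredient you omitted: super-additivity of $n\mapsto\inf_{(x,v)}I(x,v;f^n)$, which follows from Proposition~\ref{soma I(x,v;f^n)}. From $C_{\mathcal{X}}(f)>0$ and continuity of $I(\cdot;f^{n_0})$ on the compact $T^1\TT^2$ one gets $I(x,v;f^{n_0})\ge n_0\delta>0$ everywhere for some $n_0$, and super-additivity upgrades this to $\tfrac1n\inf I\ge\delta/2$ for all large $n$. On the other hand, for the invariant set $A=\{\mathcal{X}^-\ge0\}$ the transfer operator localizes, $\mathcal{L}^n(\mathbf{1}_A\phi)=\mathbf{1}_A\mathcal{L}^n\phi$, so your adjoint identity restricted to $A$ gives $\mu(A)\,\tfrac1n\inf I(\cdot;f^n)\le-\tfrac1n\int_A\log m(Df^n)\,d\mu\to-\int_A\mathcal{X}^-\,d\mu\le0$, forcing $\mu(A)=0$. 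No asymptotics of singular vectors are needed. (One further caution: your closing inference ``$\sum_n\tfrac1n\inf I=-\infty$'' relies on the series definition of $C_{\mathcal{X}}$ printed here; in \cite{NUH} the definition is a supremum over $n$, which is what the endgames of Sections~3 and~4 actually verify, and with that definition the contradiction must be run as above rather than through divergence of a series.)
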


Our main results are:

\begin{thm}\label{teo.A para homotetias}
For $E = k\cdot Id \in M_{2\times 2}(\zz)$, with $|k| \ge 5$, the intersection $[E] \cap \mathcal{U}$ is non-empty and in fact contains maps that are real analytically homotopic to E.
\end{thm}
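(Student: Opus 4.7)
The plan is to construct $f$ as a composition $f = E \circ h$, where $h : \TT^2 \to \TT^2$ is a real-analytic, area-preserving diffeomorphism analytically isotopic to the identity; then $f$ is automatically real-analytically homotopic to $E$. For $h$ I would take a composition of two transverse shears, e.g. $h = S_1 \circ S_2$ with $S_1(x,y) = (x + \phi(y), y)$ and $S_2(x,y) = (x, y + \psi(x))$ for suitable trigonometric polynomials $\phi, \psi$ of sufficiently large amplitude. Each $S_i$ is real-analytic, area-preserving and isotopic to $\mathrm{Id}$, and their composition has non-trivial off-diagonal derivative at most points. This replaces, in the homothety case, the perturbations built around the eigendirections of $E$ in \cite{NUH}, which collapse when $E$ is isotropic.

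To verify $f \in \mathcal{U}$, use $Df = k\,Dh$, $\det Df = k^2$, and the transfer-operator identity $\sum_{y \in f^{-n}(x)} k^{-2n} = 1$ to rewrite
\[
I(x,v;f^n) \;=\; -n\log|k| \;+\; \sum_{y \in f^{-n}(x)} \frac{\log \|B_n(y)\,v\|}{k^{2n}},
\]
where $B_n(y) := Dh(y)^{-1}\,Dh(f(y))^{-1}\cdots Dh(f^{n-1}(y))^{-1}$ is a product of $n$ unimodular matrices. The problem thus reduces to showing that the preimage average $\mathbb{E}_{y \in f^{-n}(x)}\,\log\|B_n(y)\,v\|$ exceeds $n\log|k|$ uniformly in $(x,v) \in T^1\TT^2$, with a gap large enough that $C_{\mathcal{X}}(f) = \sum_n \frac{1}{n}\inf_{(x,v)} I(x,v;f^n)$ stays strictly positive. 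I would combine two ingredients: (i) exponentially fast equidistribution of the preimages of $f^n$ on $\TT^2$, which follows because $f$ is genuinely expanding, allowing the preimage average to be compared with the Lebesgue integral $\int \log \|B_n\,v\|\,d\mu$; and (ii) a choice of $\phi,\psi$ making the $Dh^{-1}$-cocycle over the Lebesgue-preserving system sufficiently hyperbolic, with a positive Lyapunov exponent $\log\Lambda$ for some $\Lambda > |k|$. The latter is available because $h$ is only required to be isotopic to $\mathrm{Id}$, not $C^0$-close to it, so $\log\|Dh^{-1}\|$ may be made as large as needed on a definite-measure set.

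The main obstacle, and the origin of the hypothesis $|k| \geq 5$, is translating ``Lebesgue expansion'' of $B_n$ into ``preimage expansion'' in a way that is uniform in $(x,v)$. Since $h$ is isotopic to $\mathrm{Id}$, there are always open regions where $Dh^{-1}$ acts nearly isometrically on some direction, and an adversarial choice of $(x,v)$ tries to concentrate the preimages of $f^n$ in those regions with $v$ aligned along the neutral direction. Ruling this out requires a Jensen/concavity estimate whose deficit is absorbed only when the number of preimages $k^{2n}$ is large enough to dilute the exceptional set; I expect the threshold $k^2 \geq 25$ to emerge as the smallest integer square for which this dilution beats the worst-case per-iterate loss from the identity-like regions of $h$. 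Executing this quantitative estimate---paralleling Subsection 2.3 of \cite{NUH}, but without the distinguished eigendirection of $E$ available there---is the main technical content. Once it is established, $f \in \mathcal{U}$, hence $f$ is non-uniformly hyperbolic by Theorem \ref{lema funções em U são NUH}, which proves the theorem.
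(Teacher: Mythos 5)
Your construction is essentially the one the paper uses: $f = E\circ v_r\circ h_t$, a homothety post-composed with two transverse analytic shears, is exactly the family studied in Section \ref{seção homotetias}. But your verification that $f\in\mathcal U$ goes down a genuinely different road and, more importantly, stops exactly where the work begins. The paper does \emph{not} compare preimage averages with Lebesgue integrals or invoke equidistribution of $f^{-n}(x)$. It runs a deterministic cone-field and counting argument: the function $s$ is chosen with prescribed sign and size of $s'$ on four intervals, so that the $k\times k$ lattice $E^{-1}(x)$ has an explicitly controlled number of points in the ``good'' and ``critical'' regions (Prop.~\ref{prop. pre-imagens de E}, Lemma~\ref{lema pre-imagens de f}); Lemma~\ref{lema derivadas e cones} then gives, for \emph{every} preimage $y$ and every unit vector $u$, a lower bound on $\|(D_yf)^{-1}u\|$ according to which region $y$ falls in. Feeding these counts into Prop.~\ref{soma I(x,v;f^n)} via the recursion $a_{n+1}\ge c\,a_n+e$ for the proportion of pulled-back vectors remaining in the vertical cone yields a lower bound for $\frac1n I(x,u;f^n)$ of the form $C + (\text{coefficient})\log t + (\text{coefficient})\log r$, uniform in $(x,u)$ by construction, and the coefficients are positive precisely when $k\ge 5$ --- that is where the threshold actually comes from, not from a dilution-versus-loss heuristic.

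The gap in your proposal is therefore concrete: the step you defer as ``the main technical content'' --- converting expansion on average into a lower bound on $\inf_{(x,v)}I(x,v;f^n)$ --- is the entire proof, and the route you sketch for it is unlikely to close. First, the observable $\log\|B_n(y)v\|$ depends on $n$ and grows linearly in $n$, so an equidistribution/decay-of-correlations comparison between the $\det^{-1}$-weighted preimage average and the Lebesgue integral does not come for free; what you would be estimating is essentially the backward Lyapunov exponent itself, which makes the argument circular without substantial extra input. Second, $C_{\mathcal X}(f)$ is defined through an infimum over $(x,v)$, and the adversarial alignment of $v$ with near-neutral directions of $Dh^{-1}$ is exactly the phenomenon the paper's good/critical decomposition and the bookkeeping of the signs $*_y(u)$ (Definition~\ref{defn sinal dos vetores}, Prop.~\ref{prop. sinal constante}) are designed to control; no Jensen-type estimate is supplied, and the claim that $k^2\ge 25$ is the resulting threshold is asserted rather than derived. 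To complete the proof you would need to either carry out the paper's counting scheme or supply a genuinely new uniform estimate replacing it.
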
       

\begin{thm}\label{teo. A  para nao homotetias}
    If $E \in M_{2\times 2}(\zz)$ is not a homothety and $det(E) > 4$, the intersection $[E] \cap \mathcal{U}$ is non-empty and in fact contains maps that are real analytically homotopic to E.
\end{thm}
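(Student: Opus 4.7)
The plan is to exhibit an explicit real-analytic area-preserving perturbation $f = f_E$ of $E$ for which $C_{\mathcal{X}}(f) > 0$; Theorem \ref{lema funções em U são NUH} then gives that $f$ is NUH, and the $\mathcal{C}^1$-openness of $\mathcal{U}$ produces the required open set inside the homotopy class $[E]$. The strategy, following \cite{NUH}, exploits the fact that although for the pure linear map one has $\inf_v \log \|E^{-n}v\| = -n\log \sigma^+(E) < 0$, after introducing a trigonometric perturbation, the $\det(E)^n$ preimages $y \in f^{-n}(x)$ carry mildly rotated derivatives $(D_y f^n)^{-1}$, and the weighted average defining $I(x,v;f^n)$ can be made positive. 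The advantage of the non-homothety hypothesis over the homothety case of Theorem \ref{teo.A para homotetias} is that $E$ already has a distinguished direction where $(E^n)^{-1}$ contracts the least, so the perturbation needs to supply less spreading, and hence fewer preimages (i.e.\ a smaller degree) suffice.

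Concretely, I would normalize $E$ via its singular value decomposition. Since $E$ is not a homothety, $\sigma^+(E) > \sigma^-(E)$ and there is a well-defined orthonormal frame $V^\pm \subset \rr^2$ of principal axes, with $\sigma^+\sigma^- = |\det E| > 4$. Then construct the perturbation as a composition $f = \tau_2 \circ \tau_1 \circ E$, where each $\tau_i(p) = p + \varepsilon\, g_i(\ell_i(p))\, e_i$ is an area-preserving trigonometric shear, real-analytic and isotopic to the identity, with axes $e_i$ and linear forms $\ell_i$ tuned to $V^\pm$. This gives an $f$ real-analytically homotopic to $E$, and the derivative factors as $D_y f = (I + R_2(y))(I + R_1(y))\, E$ with explicit small rank-one corrections $R_i$.

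The core analytic task is to lower bound $\inf_{(x,v) \in T^1\TT^2} I(x,v;f^n)$. Using conservativity one may write
\[
I(x,v;f^n) = \sum_{y \in f^{-n}(x)} \frac{1}{\det(D_y f^n)} \log \|(D_y f^n)^{-1} v\|,
\]
whose weights sum to one, so $I(x,v;f^n)$ is an expectation of the log-norm over the preimage distribution. I would then decompose $v$ along $V^+$ and $V^-$, track how the products $(D_y f^n)^{-1}$ act on each component along distinct preimage branches, and use the tuning of the shears so that the induced rotations of $v$ across different branches sweep out the unit circle as $n$ grows. The hypothesis $\det(E) > 4$ enters here as the combinatorial threshold at which the number of branches is sufficient for the convex combination of the log-norms to overcome the negative minimal direction, with the surplus relative to the homothety case supplied by the anisotropy $\sigma^+ > \sigma^-$.

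The main obstacle I anticipate is handling the boundary case $\det(E) = 5$. Here the available integer matrix classes partition into families with quite different singular-value ratios and eigenvalue regimes (real eigenvalues $1$ and $5$, $-1$ and $-5$, or a complex-conjugate pair of modulus $\sqrt{5}$ when $|\mathrm{tr}(E)| \le 2$), and for each family the parameters $\varepsilon$, $g_i$, $e_i$ must be chosen so that the final estimate $C_{\mathcal{X}}(f) > 0$ holds with no slack. The arguments for large degrees in \cite{NUH} have considerable room, whereas the threshold cases require a tight choice of perturbation, possibly a finite case analysis, and carefully exploiting the anisotropy introduced in the second step to close the averaging estimate of the third.
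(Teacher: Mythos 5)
There is a genuine gap, and it starts at the normalization step. Your claim that ``since $E$ is not a homothety, $\sigma^+(E)>\sigma^-(E)$'' is false: any conformal integer matrix such as $\begin{pmatrix}2 & -1\\ 1 & 2\end{pmatrix}$ (determinant $5$, not a homothety) has equal singular values, so there is no distinguished orthonormal frame $V^\pm$ and no anisotropy to exploit; yet these matrices are squarely within the scope of the theorem. Even when $\sigma^+>\sigma^-$, conjugating by the orthogonal matrix of the SVD does not descend to $\TT^2$ (it is not in $GL_2(\zz)$), so shears ``with axes tuned to $V^\pm$'' are not well-defined torus maps unless those axes happen to be rational. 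The invariant the proof actually needs is arithmetic, not metric: one conjugates by $P\in GL_2(\zz)$ coming from the Smith normal form so that $E^{-1}\zz^2=\tfrac{1}{\tau_2}\zz\times\tfrac{1}{\tau_1}\zz$ (placing the $d=\tau_1\tau_2$ preimages of each point on a $\tau_1\times\tau_2$ lattice), and uses non-homothety only to guarantee that $\mathbb{P}E$ does not fix $[(0,1)]$, which yields a cone $\Delta_\alpha^v$ with $\overline{E^{-1}\Delta_\alpha^v}\subset \mathrm{int}(\Delta_\alpha^h)$. That strict cone contraction, not the singular-value gap, is what replaces the extra degree needed in the homothety case.

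The second gap is that your ``core analytic task'' has no mechanism behind it. The positivity of $C_{\mathcal{X}}(f)$ does not come from preimage branches whose directions ``sweep out the unit circle''; it comes from the opposite phenomenon: a cone-field argument that traps most pulled-back directions in the expanding (vertical) cone. Concretely, the paper chooses the composed shears $f_t=E\circ v\circ h_t$ so that at most one of the $d$ preimages of any point lies in the critical strip, counts how many preimages land in each good region, and derives a linear recursion $a_{n+1}\ge c\,a_n+e$ for the proportion $a_n$ of branches in $Df^{-n}(x,u)$ whose vector lies in $\Delta_\beta^v$, giving $\liminf a_n\ge L(\tau_1,\tau_2)$. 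The threshold $\det(E)>4$ is then visible as the condition making $L(\tau_1,\tau_2)>\tfrac12$ and the coefficient of $\log t$ in the resulting lower bound for the averages $J_i$ positive, after which one lets $t\to\infty$. Without this counting of preimages relative to good and critical regions, the recursion for $a_n$, and the explicit $\log t$ bookkeeping, the assertion that ``$\det(E)>4$ is the combinatorial threshold'' is not justified, and the proposed case analysis over eigenvalue regimes at $\det(E)=5$ is aimed at the wrong invariant (the relevant parameter is the pair of elementary divisors $(\tau_1,\tau_2)$, not the spectrum of $E$).
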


Our Theorem B is equivalent to the Theorem A of \protect\cite{NUH} but includes three cases which are not proved there. The main difficulty for our results is that, in the case of a homothety, the induced projective action is trivial; non-triviality of this projective action is a central piece in the method of Andersson et al.

Finally, by inspection on the proofs of Theorems B and C of \protect\cite{NUH}, we can see that it works for all cases included here. Hence, defining:
\[
C_{\det}(f) := \sup\limits_{n \in \nn} \frac{1}{n} \inf\limits_{x \in \TT^2} \log (\det(D_xf^n)) > 0,
\]
and the open set:
\[
\mathcal{U}_1 := \left\{f \in \text{End}^r_{\mu}(\TT^2) \ : \  C_{\mathcal{X}}(f) > -\frac{1}{2} C_{\det}(f)  \right\},
\]
we have from Theorems \ref{teo.A para homotetias} and \ref{teo. A  para nao homotetias} that if a linear endomorphism $E$ satisfies the conditions of either of the Theorems, then $[E] \cap \mathcal{U}_1 \neq \emptyset$. Therefore, by Theorem $B$ of \protect\cite{NUH}, we have conituity of the maps $\mathcal{U}_1 \ni f \mapsto \int_{\TT^2} \mathcal{X}^{\pm}(f) d\mu$ in the $\mathcal{C}^1$ topology. 

From Theorem C of \protect\cite{NUH}, we conclude that for any linear endomorphism E as in Theorem \ref{teo.A para homotetias} or \ref{teo. A  para nao homotetias}. If $\pm 1$ is not an eigenvalue of $E$, then $[E] \cap \mathcal{U}$ contains stably ergodic endomorphisms. In fact, it contains stably Bernoulli endomorphisms and, in particular, maps that are mixing of all orders.

\subsection*{Acknowledgements}

The results presented here were conjectured by Martin Andersson, Pablo D. Carrasco and Radu Saghin in \protect\cite{NUH}, I thank Pablo D. Carrasco, who is also my MSc advisor, for the suggestion of the problem and for the hours of conversations on the subject that were crucial to this article. 

This work has been supported by the Brazillian research agencies CAPES and CNPq.
\section{Preliminary}

In order to prove Theorems \ref{teo.A para homotetias} and \ref{teo. A  para nao homotetias}, we require a result on the computation of the numbers $I(x,v;f^n)$ which the proof can be found in \protect\cite{NUH}:

\begin{prop}\label{soma I(x,v;f^n)}
    For any $n \in \nn$, it holds:
    \begin{equation}
        I(x,v;f^n) = \sum\limits_{i=0}^{n-1}\sum\limits_{y \in f^{-i}(x)}\frac{I(y, F_y^{-i}v;f)}{\det (D_yf^i)},
    \end{equation}
    where $F_y^{-i}v = \frac{(D_yf^i)^{-1}v}{\|(D_yf^i)^{-i}v\|}$.
\end{prop}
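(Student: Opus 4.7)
The plan is to argue by induction on $n$, with the base case $n=1$ being a one-line check: $f^0=\mathrm{Id}$ gives $\det(D_xf^0)=1$, and because $(x,v)\in T^1\TT^2$ is already a unit vector one has $F_x^0v=v$, so the right-hand side reduces to the single term $I(x,v;f)$.

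For the inductive step I would write $f^n=f^{n-1}\circ f$ and use the partition
\[
f^{-n}(x) = \bigsqcup_{y\in f^{-(n-1)}(x)}f^{-1}(y).
\]
The chain rule applied to any $z\in f^{-1}(y)$ with $y\in f^{-(n-1)}(x)$ yields $D_zf^n=D_yf^{n-1}\cdot D_zf$, so that $\det(D_zf^n)=\det(D_yf^{n-1})\det(D_zf)$ and $(D_zf^n)^{-1}v=(D_zf)^{-1}(D_yf^{n-1})^{-1}v$. The key algebraic observation is that, using the identity $\|Au\|=\|u\|\cdot\|A(u/\|u\|)\|$, the logarithm of the pulled-back norm decomposes as
\[
\log\|(D_zf^n)^{-1}v\| = \log\|(D_yf^{n-1})^{-1}v\| + \log\bigl\|(D_zf)^{-1}F_y^{-(n-1)}v\bigr\|,
\]
which is precisely what introduces the normalised pull-back $F_y^{-(n-1)}v$ appearing in the statement.

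Plugging this decomposition into the definition of $I(x,v;f^n)$ and regrouping the double sum according to $y\in f^{-(n-1)}(x)$, the first summand factors through the inner sum $\sum_{z\in f^{-1}(y)}1/\det(D_zf)=1$, which holds because $f$ preserves Lebesgue: the change of variables formula applied to $\int\phi\circ f\,d\mu=\int\phi\,d\mu$ shows that the transfer operator fixes the constant function $1$, pointwise by continuity of the Jacobian. This collapses the first summand to $I(x,v;f^{n-1})$, to which the inductive hypothesis applies and gives $\sum_{i=0}^{n-2}\sum_{y\in f^{-i}(x)}I(y,F_y^{-i}v;f)/\det(D_yf^i)$. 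The second summand regroups immediately as
\[
\sum_{y\in f^{-(n-1)}(x)}\frac{1}{\det(D_yf^{n-1})}\,I\bigl(y,F_y^{-(n-1)}v;f\bigr),
\]
which is exactly the missing $i=n-1$ term. Adding the two contributions closes the induction.

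I do not foresee a real obstacle: once the multiplicative decomposition of the norm and the Perron--Frobenius identity $\sum_{z\in f^{-1}(y)}1/\det(D_zf)=1$ are in hand, the argument is bookkeeping. The one point demanding care is the direction in which the chain rule is expanded: writing $f^n=f^{n-1}\circ f$ (rather than $f\circ f^{n-1}$) is what makes the derivatives evaluated at the intermediate point $y$ line up with the normalised vectors $F_y^{-i}v$ from the statement, so that the inductive telescoping produces the indexed sum exactly as claimed.
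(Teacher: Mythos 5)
Your proof is correct: the chain-rule decomposition of $\log\|(D_zf^n)^{-1}v\|$ through the intermediate point $y=f(z)$, the Perron--Frobenius identity $\sum_{z\in f^{-1}(y)}1/\det(D_zf)=1$ for area-preserving local diffeomorphisms, and the telescoping induction are exactly the standard argument. The paper itself defers this proof to \protect\cite{NUH}, where it is carried out by essentially the same computation, so there is nothing to add.
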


\subsection{Shears}

For fixed points $z_1, z_2, z_3, z_4 \in \TT^1$, in this order, take the closed intervals $I_1 = [z_1,z_2]$, $I_3 = [z_3,z_4]$, and the open intervals $I_2 = (z_2,z_3)$ and $I_4 = (z_4,z_1)$.

\begin{defn}\label{defn regiões boas e crit} We define the horizontal and vertical critical regions in $\TT^2$ as $\mathcal{C}_h = (I_1 \cup I_3) \times \TT^1$, $\mathcal{C}_v = \TT^1 \times (I_1 \cup I_3)$ and its complements $\mathcal{G}_h = \TT^2 \setminus \mathcal{C}_h$ , $\mathcal{G}_v = \TT^2 \setminus \mathcal{C}_v$ are respectively the horizontal and vertical good region.

We then divide the good regions into $\mathcal{G}_h^+ = I_4 \times  \TT^1$, $\mathcal{G}_h^- = I_2 \times  \TT^1$, $\mathcal{G}_v^+ = \TT^1 \times I_4$ and $\mathcal{G}_v^- = \TT^1 \times I_2$.
\end{defn}

For fixed numbers $0<a<b$, we take $s: \TT^1 \to \rr$ as an analytic map satisfying the following conditions:
\begin{enumerate}
    \item If $z \in I_4$, then $a<s'(z)<b$;
    \item If $z \in I_2$, then $-b<s'(z)<-a$;
    \item If $z \in I_1 \cup I_3$, then $|s'(z)|<b$.
\end{enumerate}

Consider the two families of conservative diffeomorphisms of the torus given by:
\[
h_t(x_1,x_2) = (x_1,x_2 + t s(x_1)), \ \ v_r(x_1,x_2) = (x_1 + r s(x_2),x_2), \ \ \ t,r \in \rr.
\]
Note that:
\[
D_{(x_1,x_2)}h_t = \left( \begin{matrix}
1 & 0\\
ts'(x_1) & 1
\end{matrix}\right), \ \ \ 
D_{(x_1,x_2)}v_r = \left( \begin{matrix}
1 & rs'(x_2)\\
0 & 1
\end{matrix}\right).
\]

In order to simplify the computations we will consider the maximum norm on $T\TT^2$ as $\|(u_1,u_2)\| = \text{max}\{|u_1|,|u_2|\}$, and all the computations from now on are performed using this norm. This way, we get, for every $x \in \TT^2$:

\begin{equation*}
    \|D_xh_t\| < bt+1, \ \ \text{and} \ \ \|D_xv_r\| < bt+1.
\end{equation*}

\begin{defn} \label{defn cones vert e hor}
Given $\alpha>0$, the corresponding horizontal cone is $\Delta_{\alpha}^h=\{(u_1,u_2) \in \rr^2 \ : \ |u_2| \le \alpha|u_1|\}$, while the corresponding vertical cone is its complement $\Delta_{\alpha}^v = \rr^2 \setminus \Delta_{\alpha}^h$,
\end{defn}

\begin{lema}\label{lema derivadas e cones}
For $\alpha > 1$, let $\Delta_{\alpha}^h$ and $\Delta_{\alpha}^v$ be the corresponding horizontal and vertical cones. Then, for every $t,r > \frac{2\alpha}{a}$, and, for every unit vector $u \in T_x\TT^2$, the following holds:
\begin{enumerate}
    \item If $u \in \Delta_{\alpha}^v$, and:
    \begin{enumerate}
        \item $x \in \mathcal{G}_v$, then
        \begin{itemize}
            \item $(D_xv_r)^{-1}u \in \Delta_{\alpha}^h$ \ ($D_xv_r^{-1} \Delta_{\alpha}^v \subset \Delta_{\alpha}^h$);
            \item $\|(D_xv_r)^{-1}u\|>\frac{ar-\alpha}{\alpha} = r\frac{a-\frac{\alpha}{r}}{\alpha}$;
        \end{itemize}
        \item $x \in \mathcal{C}_v$, then $\|(D_xv_r)^{-1}u\|>\frac{1}{\alpha}$.
    \end{enumerate}
    \item If $u = \pm (1,u_2) \in \Delta_{\alpha}^h$, then:
    \begin{enumerate}
        \item either for every $x \in \mathcal{G}_v^+$ ( if $u_2\le 0$) or for every $x \in \mathcal{G}_v^-$ (if $u_2 \ge 0$) it holds:
        \begin{itemize}
            \item $(D_xv_r)^{-1}u \in \Delta_{\alpha}^h$;
            \item $\|(D_xv_r)^{-1}u\| >1$;
        \end{itemize}
        \item for all other $x$, we have $\|(D_xv_r)^{-1}u\|> \frac{1}{br+1}$.
    \end{enumerate}
    \item If $u \in \Delta_{\alpha}^h$, and:
    \begin{enumerate}
        \item $x \in \mathcal{G}_h$, then
        \begin{itemize}
            \item $(D_xh_t)^{-1}u \in \Delta_{\alpha}^v$ \ ($D_xh_t^{-1} \Delta_{\alpha}^h \subset \Delta_{\alpha}^v$);
            \item $\|(D_xh_t)^{-1}u\|>\frac{at-\alpha}{\alpha} = t\frac{a-\frac{\alpha}{t}}{\alpha}$;
        \end{itemize}
        \item $x \in \mathcal{C}_h$, then $\|(D_xh_t)^{-1}u\|>\frac{1}{\alpha}$.
    \end{enumerate}
    \item If $u = \pm (u_1,1) \in \Delta_{\alpha}^v$, then:
    \begin{enumerate}
        \item either for every $x \in \mathcal{G}_h^+$ ( if $u_1\le 0$) or for every $x \in \mathcal{G}_h^-$ (if $u_1 \ge 0$) it holds:
        \begin{itemize}
            \item $(D_xh_t)^{-1}u \in \Delta_{\alpha}^v$;
            \item $\|(D_xh_t)^{-1}u\| >1$;
        \end{itemize}
        \item for all other $x$, we have $\|(D_xh_t)^{-1}u\|> \frac{1}{bt+1}$.
    \end{enumerate}
\end{enumerate}
\end{lema}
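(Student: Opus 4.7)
The plan is a direct case-by-case calculation, exploiting the explicit triangular form
\[
(D_xv_r)^{-1} = \begin{pmatrix} 1 & -rs'(x_2) \\ 0 & 1 \end{pmatrix}, \qquad (D_xh_t)^{-1} = \begin{pmatrix} 1 & 0 \\ -ts'(x_1) & 1 \end{pmatrix},
\]
together with the elementary estimate $\|(D_xf)^{-1}u\| \ge \|u\|/\|D_xf\|$ that follows from $u = (D_xf)\cdot(D_xf)^{-1}u$. By the obvious $(x_1,x_2)\leftrightarrow(x_2,x_1)$ symmetry of the setup, items (3) and (4) are formally identical to items (1) and (2), so I would only carry out the proof of the first two and invoke this symmetry.

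For item (1), I would write the unit vector as $u=(u_1,u_2)$; since $\alpha>1$ and $u\in\Delta_\alpha^v$ is of unit max-norm, this forces $|u_2|=1$ and $|u_1|<1/\alpha$. The image $(D_xv_r)^{-1}u = (u_1 - rs'(x_2)u_2,\,u_2)$ preserves the second coordinate, which already yields $\|(D_xv_r)^{-1}u\|\ge 1>1/\alpha$, settling (1b). For (1a), on $\mathcal{G}_v=\TT^1\times(I_2\cup I_4)$ one has $|s'(x_2)|>a$, so $|u_1-rs'(x_2)u_2|>ar-1/\alpha$, and a short manipulation using the hypothesis $r>2\alpha/a$ delivers both the stated norm bound $(ar-\alpha)/\alpha$ and the cone inclusion $|u_2|\le\alpha|u_1-rs'(x_2)u_2|$, i.e.\ $(D_xv_r)^{-1}u\in\Delta_\alpha^h$.

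For item (2), I would write $u=\sigma(1,u_2)$ with $\sigma\in\{\pm 1\}$ and $|u_2|\le 1$; the image equals $\sigma(1-rs'(x_2)u_2,\,u_2)$, so the factor $\sigma$ affects neither the max-norm nor cone membership. Recalling that $s'$ is positive on $I_4$ and negative on $I_2$, the ``matching'' configurations of (2a) are precisely those in which $s'(x_2)u_2\le 0$, namely $x\in\mathcal{G}_v^+=\TT^1\times I_4$ with $u_2\le 0$, or $x\in\mathcal{G}_v^-=\TT^1\times I_2$ with $u_2\ge 0$; there $|1-rs'(x_2)u_2|\ge 1$ gives at once $\|(D_xv_r)^{-1}u\|\ge 1$, together with $|u_2|\le 1\le\alpha$ for the cone inclusion. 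For every other $x$, cancellation can be severe and I would abandon the direct estimate in favour of $\|(D_xv_r)^{-1}u\|\ge\|u\|/\|D_xv_r\|>1/(br+1)$, using the operator-norm bound $\|D_xv_r\|<br+1$ already noted before the statement.

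I do not expect any genuine obstacle; the only real bit of care is the bookkeeping in items (2) and (4), where one has to verify that the sign convention on $s'$ over $I_2$ and $I_4$ lines up with the partition $\mathcal{G}_v^\pm$ (resp.\ $\mathcal{G}_h^\pm$) and with the sign of the second (resp.\ first) coordinate of $u$ in exactly the way announced, so that the four ``matching'' subcases are indeed the complement of the cancellation-prone regime handled by the $1/(br+1)$ and $1/(bt+1)$ fallback bounds.
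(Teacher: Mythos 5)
Your proposal is correct and follows essentially the same route as the paper's proof: direct computation with the explicit triangular inverses, a sign analysis of $s'(x_2)u_2$ to identify the ``matching'' cases in item (2), the fallback bound $\|(D_xv_r)^{-1}u\|\ge \|u\|/\|D_xv_r\|>1/(br+1)$ for (2b), and symmetry for items (3)--(4). The only (harmless) difference is that you estimate on a general unit vector where the paper evaluates on the extreme cone vectors $(1,\pm\alpha)$ and invokes minimality there; your version also has the small advantage of explicitly settling case 1(b), which the paper's written proof passes over.
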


\begin{proof}
    We prove items 1 and 2, the case for $h_t$ is analogous. Let $x = (x_1,x_2) \in \mathcal{G}_v$, and $u^{\pm} = (1,\pm \alpha)$ then:
    \[
    (D_xv_r)^{-1}u^{\pm} = \begin{pmatrix}
        1 & -rs'(x_2)\\
        0 & 1
    \end{pmatrix}
    \begin{pmatrix}
        1 \\ \pm \alpha
    \end{pmatrix}
    = \begin{pmatrix}
        1 \mp rs'(x_2)\alpha \\
        \pm \alpha
    \end{pmatrix},
    \]
    also since $x \in \mathcal{G}_v$, $a<|s'(x_2)|<b$, we also have $\alpha>1$ and $r> \frac{2\alpha}{a}$, hence:
    \[
    |1\mp rs'(x_2)\alpha| \ge r\alpha a-1 > 2 \alpha^2 -1 > \alpha>1,
    \]
    which shows that $(D_xv_r)^{-1} \Delta_{\alpha}^v \subset \Delta_{\alpha}^h$. Also, $\|(D_xv_r)^{-1}u\| = |1 \mp rs'(x_2)\alpha|>ra\alpha -1$. Now, noticing that the minimal expansion of vectors in $\Delta_{\alpha}^v$ occurs on either of $(1,\pm \alpha)$, we have for every unit vector $u \in \Delta_{\alpha}^v$:
    \[
    \|(D_xv_r)^{-1}u\| \ge \frac{\|(D_xv_r)^{-1}(1,\pm \alpha)\|}{\|(1,\pm \alpha)\|} > \frac{r\alpha-1}{\alpha}.
    \]

    For part 2 (a), we have for x $\in \mathcal{G}_v^+$ $s'(x_2)>a>0$, and for $x \in  \mathcal{G}_v^-$, $s'(x_2)<-a<0$, thus, by simple calculations analogous to the last one, we get the results. Finally, for (b) we just use $m((D_xv_r)^{-1}) = \frac{1}{\|D_xv_r\|} > \frac{1}{br+1}$ for every $x \in \TT^2$.
\end{proof}

\section{Endomorphisms and Shears: Proof of Theorem \ref{teo.A para homotetias}} \label{seção homotetias}

Fix $E = k \cdot Id$, for some $k \in \nn$ (we shall make the entire argument on $k \in \nn$ for the sake of simplicity of notation, we emphasize that the entire argument works for $k \in \zz$ by replacing $k$ for $|k|$ when necessary). Fix a $\delta <\frac{1}{4k}$ and define the critical and good regions as in Def. \ref{defn regiões boas e crit} for points $z_1, z_2, z_3, z_4 \in \TT^1$ such that:
\begin{itemize}
    \item $I_1=[z_1,z_2]$ and $I_3 = [z_3,z_4]$ have size $2\delta$;
    \item The translation of $I_1$ by a multiple of $\frac{1}{k}$ does not intersect $I_3$.
    \item $I_2 = (z_2,z_3)$ and $I_4=(z_4,z_1)$ have size strictly larger than $\frac{1}{k}\floork$, where $[p]$ denotes the floor of $p$.
\end{itemize}

It is obtained directly from the definitions that:

\begin{prop} \label{prop. pre-imagens de E}
For every $x = (x_1,x_2) \in \TT^2$,  $E^{-1}(x)$ has $k^2$ points given by:
\[
E^{-1}(x_1,x_2) = \left\{\left(\frac{x_1+i}{k},\frac{x_2+j}{k}\right)\ : \ i,j = 0,\cdots,k-1\right\}.
\]
At least $k \floork$ are inside each of $\mathcal{G}_v^+$, $\mathcal{G}_v^-$, $\mathcal{G}_h^+$ and $\mathcal{G}_h^-$, and at most $k$ of them are inside each of $\mathcal{C}_v$, $\mathcal{C}_h$.
\end{prop}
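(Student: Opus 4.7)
The plan is to verify the three assertions directly from the definition of $E$ and the sizes of the intervals $I_1, I_2, I_3, I_4$ on $\TT^1$.

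First I would establish the enumeration of $E^{-1}(x)$. Since $E$ acts coordinatewise and $E = k \cdot Id$, the equation $E(y_1,y_2) = (x_1,x_2)$ in $\TT^2$ reduces to $ky_i \equiv x_i \pmod 1$, which has solutions $y_i = (x_i+\ell)/k$ for $\ell \in \{0,\dots,k-1\}$. The formula follows, and a fortiori $\#E^{-1}(x) = k^2 = \deg E$.

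Next I would prove the auxiliary counting statement: for any open interval $I \subset \TT^1$ of length $|I| > n/k$ and any $k$-term arithmetic progression $P = \{p + j/k \mod 1 : j = 0,\dots,k-1\}$ with step $1/k$, one has $\#(P \cap I) \ge n$. Lifting to $\rr$, the progression lifts to the set $\zz/k + p$, and the points of $P$ in $I$ correspond bijectively to integers in an open interval of length $k|I| > n$, which always contains at least $n$ integers. Applying this with $I = I_4$ and $n = \floork$ to the second coordinates $\{(x_2+j)/k : j=0,\dots,k-1\}$, I obtain at least $\floork$ values of $j$ for which $(x_2+j)/k \in I_4$. For each such $j$, every choice of $i \in \{0,\dots,k-1\}$ produces a preimage in $\mathcal{G}_v^+ = \TT^1 \times I_4$, giving the desired $k\floork$ points. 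The same argument applied to $I_2$ handles $\mathcal{G}_v^-$, and swapping the roles of the coordinates handles $\mathcal{G}_h^{\pm}$.

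For the upper bound on $\#(E^{-1}(x) \cap \mathcal{C}_v)$ I would exploit the two hypotheses on $\delta$. Since $|I_1| = |I_3| = 2\delta < 1/(2k) < 1/k$ and the arithmetic progression $\{(x_2+j)/k\}_j$ has step $1/k$, each of $I_1, I_3$ contains at most one point of the progression. The condition that no translate of $I_1$ by a multiple of $1/k$ meets $I_3$ rules out simultaneous occupation of both intervals: if $p + j/k \in I_1$ and $p + j'/k \in I_3$ for distinct $j, j'$, then $I_3 \cap (I_1 + (j-j')/k) \neq \emptyset$, contradicting the assumption. Hence at most one $j$ satisfies $(x_2+j)/k \in I_1 \cup I_3$, producing at most $k$ preimages in $\mathcal{C}_v$; the argument for $\mathcal{C}_h$ is identical.

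I do not anticipate a real obstacle here: the content of the proposition is combinatorial and pins down exactly which features of the interval configuration are being used — the length bound $|I_{2,4}| > \frac{1}{k}\floork$ feeds the lower count, while $4\delta < 1/k$ together with the non-translation-intersection property feeds the upper count. The only subtlety is being careful that $I_2, I_4$ are open (so the \emph{strict} inequality on their size is essential for the integer-counting lemma), and that the non-intersection hypothesis is phrased for translates by arbitrary multiples of $1/k$, which is exactly what the progression spacing requires.
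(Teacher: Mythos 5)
Your argument is correct and is precisely the "direct from the definitions" verification that the paper asserts without writing out (the proposition is introduced with "It is obtained directly from the definitions that"). You correctly isolate the two points the construction hinges on — the strict length bound $|I_2|,|I_4|>\frac{1}{k}\floork$ for the lower count via integers in an open interval, and $4\delta<1/k$ together with the non-intersection of $1/k$-translates for the upper count — so your write-up is a faithful and complete filling-in of the omitted proof.
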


From now on, in this section, we fix any $\alpha>1$ and the corresponding cones as in Def. \ref{defn cones vert e hor}. We consider the analytic maps:
\[
f_{(t,r)} = E \circ v_r \circ h_t,
\]
which we shall denote only by $f=f_{(t,r)}$. Clearly $f$ is an area preserving endomorphism isotopic to E. We observe that, given $x \in  \TT^2$ and $y \in f^{-1}(x)$, we have:
\[
(D_yf)^{-1} = (D_yh_t)^{-1}(D_{h_t(y)}v_r)^{-1}E^{-1}.
\]

The goal is for $(D_{h_t(y)}v_r)^{-1}$ to take vectors in the vertical cone and expand them in the horizontal direction and then $(D_yh_t)^{-1}$ takes its images and expands them in the vertical direction, resulting in $(D_yf)^{-1}$ expanding in the vertical direction for most points in $f^{-1}(x)$. Thus, in order to keep track of this derivative, we must localize the points $y \in f^{-1}(x)$ in regard to which of $\mathcal{G}_h$ or $\mathcal{C}_h$ they belong, and $\{h_t(y) : y \in f^{-1}(x)\} = (E\circ v_r)^{-1}(x)$ regarding which of $\mathcal{G}_v$ or $\mathcal{C}_v$ they belong.

\begin{lema} \label{lema pre-imagens de f}
For every $x \in  \TT^2$, we have:
\begin{enumerate}
    \item $(v_r\circ E)^{-1}(x)$ has $k^2$ points of which at least $k \floork$ of them are in each one of $\mathcal{G}_v^+$ and $\mathcal{G}_v^-$ and at most $k$ of them are in $\mathcal{C}_v$;
    \item $f^{-1}(x)$ has $k^2$ points of which at least $k \floork$ of them are in each one of $\mathcal{G}_h^+$ and $\mathcal{G}_h^-$ and at most $k$ of them are in $\mathcal{C}_h$.
\end{enumerate}
\end{lema}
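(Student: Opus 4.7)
The plan is to reduce both items to a simple counting argument for arithmetic progressions on $\TT^1$, exploiting the crucial fact that $v_r$ preserves the second coordinate and $h_t^{-1}$ preserves the first. The basic counting fact I would use is: given $k$ points equally spaced on $\TT^1$ with step $\tfrac{1}{k}$ and any arc $J \subset \TT^1$ of length $L$, the number of these points lying in $J$ is at least $\lfloor Lk \rfloor$ and at most $\lceil Lk \rceil$.

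For item (1), I would write $(v_r \circ E)^{-1}(x) = E^{-1}(v_r^{-1}(x))$. Since $v_r^{-1}$ preserves the second coordinate, the second coordinates of the $k^2$ points of $(v_r \circ E)^{-1}(x)$ are exactly those of $E^{-1}(x)$, namely $\tfrac{x_2+j}{k}$ for $j=0,\dots,k-1$. Because membership in $\mathcal{G}_v^{\pm}$ and $\mathcal{C}_v$ is determined by the second coordinate alone, the statement reduces at once to Proposition \ref{prop. pre-imagens de E}.

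For item (2), I would write $f^{-1}(x) = h_t^{-1}(v_r^{-1}(E^{-1}(x)))$ and use that $h_t^{-1}$ preserves the first coordinate, so membership in $\mathcal{G}_h^{\pm}$ and $\mathcal{C}_h$ for points of $f^{-1}(x)$ is decided by the first coordinates of $v_r^{-1}(E^{-1}(x))$. These first coordinates are $\tfrac{x_1+i}{k} - r\,s\!\left(\tfrac{x_2+j}{k}\right)$ for $i,j \in \{0,\dots,k-1\}$. For each fixed $j$, as $i$ varies we obtain $k$ points of $\TT^1$ equally spaced with step $\tfrac{1}{k}$, translated by an offset depending only on $j$. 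Since $|I_2|,|I_4|>\tfrac{1}{k}\floork$, the counting fact gives, for each $j$, at least $\floork$ of these first coordinates in $I_2$ and at least $\floork$ in $I_4$; summing over the $k$ choices of $j$ yields the asserted lower bound of $k\floork$ preimages of $f^{-1}(x)$ in each of $\mathcal{G}_h^{\pm}$.

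The bound $\#(f^{-1}(x) \cap \mathcal{C}_h) \le k$ is the only non-routine step, and the part where the hypotheses on the $I_i$ are used most essentially. For each $j$, since $|I_1|=|I_3|=2\delta < \tfrac{1}{2k}$, at most one first coordinate lies in $I_1$ and at most one in $I_3$; the hypothesis that translates of $I_1$ by integer multiples of $\tfrac{1}{k}$ do not meet $I_3$ then excludes both occurring simultaneously within the same progression, so at most one first coordinate per $j$ lies in $I_1 \cup I_3$. Summing over $j$ gives the bound $k$; without this non-intersection hypothesis the bound would deteriorate to $2k$, which would be too weak for the estimates that follow.
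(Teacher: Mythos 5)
Your proof is correct and follows essentially the same route as the paper's: reduce everything to the second (resp.\ first) coordinates, observe that on each row $v_r^{-1}$ acts as a rigid translation of a $\tfrac1k$-spaced progression, and use the size and non-intersection hypotheses on the $I_i$ exactly as the author does to get $\floork$ points in each of $I_2, I_4$ and at most one point in $I_1\cup I_3$ per row. The only caveat worth noting is that the set actually used later is $v_r^{-1}(E^{-1}(x))=\{h_t(y):y\in f^{-1}(x)\}$ rather than the literal $E^{-1}(v_r^{-1}(x))$, but its second coordinates are again those of $E^{-1}(x)$, so your argument for item (1) applies verbatim.
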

\begin{proof}
\begin{enumerate}
    \item It is a direct consequence of Prop. \ref{prop. pre-imagens de E} along with the fact that the regions $\mathcal{G}_v^+$, $\mathcal{G}_v^-$ and $\mathcal{C}_v$ are invariant under $v_r$.
    \item Notice that in each row of pre-images by E of a point $x = (x_1,x_2)$ given by $\left\{\left(\frac{x_1+i}{k},\frac{x_2+j_0}{k}\right) : i = 0,\cdots,k-1\right\}$ for a fixed $j_0 \in \{0,\cdots, k-1\}$, $v_r^{-1}$ is a rotation by $-rs\left(\frac{x_2+j_0}{k}\right)$ in the circle $\TT^1 \times \left\{\frac{x_2+j_0}{k}\right\}$. Hence, at least $\floork$ of the $k$ points of this row are inside each one of $\mathcal{G}_h^+$ and $\mathcal{G}_h^-$, and at most 1 is in $\mathcal{C}_h$.
    
    As this is also true for all the $k$ rows of pre-images by E, we get at least $k \floork$ pre-images by $E \circ v_r$ are inside each one of $\mathcal{G}_h^+$ and $\mathcal{G}_h^-$, and at most $k$ pre-images by $E \circ v_r$ are inside $\mathcal{C}_h$. Finally, since these sets are invariant under $h_t$, we get the desired result.
\end{enumerate}
\end{proof}

\begin{rmk} \label{obs. pre-imagens de f}
Even knowing which regions is a point $y \in (E \circ v_r)^{-1}(x)$, we cannot determine the region which $h_t^{-1}(y)$ is inside, as $t$ is varying. That is, there may be points $y \in f^{-1}(x)$ that are in $\mathcal{G}_h$ such that $h_t(y) \in \mathcal{C}_v$ and vice-versa.
\end{rmk}

\begin{defn} In order to keep track of the vectors, define:
\begin{itemize}\label{defn sinal dos vetores}
    \item For $u=(u_1,u_2) \in \rr^2$ with $u_2 \neq 0$:
    \[
    *(u) = \left\{\begin{array}{l}     -\text{sgn}\left(\frac{u_1}{u_2}\right), \ \ \text{if} \  u_1 \neq 0,\\
     -\text{sgn}(u_2), \ \ \ \ \text{if} \ u_1=0.
    \end{array}
    \right.
    \]
    
    Notice that $*(u) = \ *(E^{-1}u)$, for every $u \in \rr^2$.

    \item For $x \in \TT^2$, $y \in f^{-1}(x)$ and $u \in \rr^2$, let $(w_1,w_2) = (D_{h_t(y)}v_r)^{-1}E^{-1}u$:
    \[
    *_y(u) =\left\{\begin{array}{l} -\text{sgn}\left(\frac{w_1}{w_2}\right), \ \ \text{if} \ w_1,w_2\neq 0, \\
    -\text{sgn}(w_2), \ \ \ \ \text{if} \ w_2\neq 0, \ w_1=0,\\
    -\text{sgn}(w_1), \ \ \ \ \text{if} \ w_1 \neq 0, \ w_2 = 0.
    \end{array}
    \right.
    \]
\end{itemize}
\end{defn}

In view of item 4 of Lemma \ref{lema derivadas e cones}, even though $(D_{h_t(y)}v_r)^{-1}$ may not send a vector \linebreak $u \in \Delta_{\alpha}^v$ to the horizontal cone if $h_t(y) \in \mathcal{C}_v$, we can still end up having expansion in the vertical direction, depending on whether $y \in \mathcal{G}_h^{*_y(u)}$ or not. In this regard, from Lemma \ref{lema pre-imagens de f}, there are $k$ points $y \in f^{-1}(x)$ such that $h_t(y)$ are in $\mathcal{C}_v$, and these points ($h_t(y)$) are all in the same circle $\TT^1 \times \left\{\frac{x_2+j_0}{k}\right\}$, hence the derivative $(D_{h_t(y)}v_r)^{-1}$ is the same for those points. We get:

\begin{prop}\label{prop. sinal constante}
For every $u \in \rr^2$, $x \in \TT^2$, then the sign $*_y(u) = \text{sg}\left(\frac{w_1}{w_2}\right)$ is the same for all points $y \in f^{-1}(x)$ such that $h_t(y) \in \mathcal{C}_v$, where $*_y(u)$ is as in Definition \ref{defn sinal dos vetores}.
\end{prop}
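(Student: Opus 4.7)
The plan is to show that all the points $h_t(y)$ with $y\in f^{-1}(x)$ and $h_t(y)\in\mathcal{C}_v$ share the same second coordinate $c\in\TT^1$, so that they all lie on the single horizontal circle $\TT^1\times\{c\}$. Once that is in place, the matrix $(D_{h_t(y)}v_r)^{-1}=\begin{pmatrix}1 & -rs'(c)\\ 0 & 1\end{pmatrix}$ depends only on $c$, while $E^{-1}=\frac{1}{k}\mathrm{Id}$ is a constant linear map; hence $(w_1,w_2)=(D_{h_t(y)}v_r)^{-1}E^{-1}u$ does not depend on which such $y$ we choose, and $*_y(u)$, which is determined entirely by the signs of $w_1, w_2$, must be the same for all of them.

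To justify the single-circle claim, recall that $h_t(f^{-1}(x))=(E\circ v_r)^{-1}(x)=v_r^{-1}(E^{-1}(x))$. Since $v_r$ preserves second coordinates, every point of this set has second coordinate of the form $\frac{x_2+j}{k}$ for some $j\in\{0,\dots,k-1\}$, inherited from $E^{-1}(x)$. Consequently, $h_t(y)\in\mathcal{C}_v$ is equivalent to $\frac{x_2+j}{k}\in I_1\cup I_3$. Now $|I_1|=|I_3|=2\delta<\frac{1}{2k}$, while the values $\frac{x_2+j}{k}$ are spaced by $\frac{1}{k}$, so each of $I_1, I_3$ contains at most one such value. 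The remaining case to exclude is that $I_1$ and $I_3$ each catch a distinct value of $\frac{x_2+j}{k}$: if $\frac{x_2+j_1}{k}\in I_1$ and $\frac{x_2+j_2}{k}\in I_3$, then the translate $I_1+\frac{j_2-j_1}{k}$ would contain $\frac{x_2+j_2}{k}$ and hence meet $I_3$, directly contradicting the assumption on translates made at the start of Section \ref{seção homotetias}.

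Therefore there is at most one index $j_0$ for which $\frac{x_2+j_0}{k}\in I_1\cup I_3$, and every relevant $h_t(y)$ has second coordinate $c=\frac{x_2+j_0}{k}$, which closes the single-circle claim and thus the proof. There is no serious obstacle in the argument; it is purely combinatorial and relies only on the careful positioning of $I_1, I_3$ arranged at the beginning of the section. The main subtlety is keeping straight which coordinate $v_r$ preserves (the second), and distinguishing the condition imposed on $h_t(y)$ from the stronger condition one might mistakenly impose on $y$ itself, as already flagged in Remark \ref{obs. pre-imagens de f}.
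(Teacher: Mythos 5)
Your proof is correct and follows the same route as the paper: the paper also deduces the proposition from the fact that all points $h_t(y)\in\mathcal{C}_v$ lie on a single circle $\TT^1\times\{\frac{x_2+j_0}{k}\}$, so that $(D_{h_t(y)}v_r)^{-1}E^{-1}$ is one fixed matrix. You merely spell out in more detail (via the spacing of the $\frac{x_2+j}{k}$ and the translate condition on $I_1,I_3$) the single-circle claim that the paper attributes to Lemma \ref{lema pre-imagens de f}.
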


\begin{defn} For a fixed $x \in \TT^2$ and:
\begin{itemize}
    \item $u \in \Delta_{\alpha}^v$, define: 
    $\left\{\begin{array}{l}
    A = \{y \in f^{-1}(x) : y \in \mathcal{G}_h, h_t(y) \in \mathcal{G}_v\}.\\ 
    B = \{y \in f^{-1}(x) : y \in \mathcal{G}_h^{*_y(u)}, h_t(y) \in \mathcal{C}_v\},\\
    \mathcal{V}_v = A \cup B,\\
    \mathcal{V}_h = f^{-1}(x) \setminus \mathcal{V}_v. \end{array}\right.$
    
    \item  $u \in \Delta_{\alpha}^h$, define:
    $\left\{\begin{array}{l}
    C = \{y \in f^{-1}(x) : y \in \mathcal{G}_h, h_t(y) \in \mathcal{G}_v^{*(u)}\}.\\
    D = \{y \in f^{-1}(x) : y \in \mathcal{G}_h^{*_y(u)}, h_t(y) \in \mathcal{C}_v \cup \mathcal{G}_v^{-*(u)}\},\\
    \mathcal{H}_v = C \cup D,\\
    \mathcal{H}_h = f^{-1}(x) \setminus \mathcal{H}_v. \end{array}\right.$
\end{itemize}
\end{defn}

A direct consequence of Lemma \ref{lema pre-imagens de f} and Prop. \ref{prop. sinal constante}, having Remark. \ref{obs. pre-imagens de f} in mind, is the following:

\begin{lema}\label{lema quant. pre-im. em cada conj.}
For a fixed $(x,u) \in T\TT^2$, $f^{-1}(x)$ has $k^2$ points, of which:
\begin{enumerate}
    \item For $u \in \Delta_{\alpha}^v$, at most $2k-1-\floork$ of them are in $\mathcal{V}_h$ and at least $(k-1)^2+\floork$ are inside $\mathcal{V}_v$, because:
    \begin{itemize}
        \item At least $(k-1)^2$ are in A and,
        \item at least $\floork$ are in B.
    \end{itemize}
    \item For $u \in \Delta_{\alpha}^h$, at most $k^2 - \floork \left(k+\floork \right)$ are in $\mathcal{H}_h$ and at least $\floork\left(k+\floork\right)$ are in $\mathcal{H}_v$, because:
    \begin{itemize}
        \item At least $(k-1)\floork$ are in C and,
        \item at least $\floork\left(1+\floork\right)$ are in D.
    \end{itemize}
\end{enumerate}
\end{lema}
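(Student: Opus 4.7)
The approach is to exploit the explicit product structure of $E^{-1}(x)$ to organize $f^{-1}(x)$ as a $k\times k$ grid $\{y_{ij}\}_{0\le i,j<k}$ and then count the points of $A$, $B$, $C$, $D$ row by row. Writing $q_j=(x_2+j)/k$ and $p_{ij}=(x_1+i)/k-rs(q_j)$, one checks immediately that $h_t(y_{ij})=(p_{ij},q_j)$ and $y_{ij}=(p_{ij},q_j-ts(p_{ij}))$. Hence the horizontal region ($\mathcal{G}_h^{\pm}$ or $\mathcal{C}_h$) containing $y_{ij}$ is determined by $p_{ij}$ alone, while the vertical region containing $h_t(y_{ij})$ is determined by $q_j$ alone, i.e.\ by the row index $j$. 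Moreover $D_{h_t(y_{ij})}v_r$ depends only on $q_j$, so the sign $*_y(u)$ from Definition \ref{defn sinal dos vetores} is constant along each row.

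Two elementary combinatorial facts then drive everything: (i) for each fixed $j$, the $k$ numbers $\{p_{ij}\}_i$ are equally spaced on $\TT^1$ with step $1/k$, so by the length constraint $|I_1|=|I_3|=2\delta<1/(2k)$ and the non-intersection assumption on $I_1$ and $I_3$, at most one $i$ yields $p_{ij}\in\mathcal{C}_h$, and at least $\floork$ yield $p_{ij}$ in each of $\mathcal{G}_h^{\pm}$; (ii) the analogous statement for $\{q_j\}_j$ gives at most one row with $q_j\in\mathcal{C}_v$ and at least $\floork$ rows with $q_j$ in each of $\mathcal{G}_v^{\pm}$. These are precisely the contents of Lemma \ref{lema pre-imagens de f} phrased row-column-wise.

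For $u\in\Delta_{\alpha}^v$, the set $A$ receives at least $k-1$ contributions from each of the at least $k-1$ rows with $q_j\in\mathcal{G}_v$, giving $|A|\ge(k-1)^2$; on the unique row (if any) with $q_j\in\mathcal{C}_v$, Proposition \ref{prop. sinal constante} pins down the common value of $*_y(u)$, so $B$ picks up at least $\floork$ points there. Summing yields $|\mathcal{V}_v|\ge(k-1)^2+\floork$, and the complementary bound $|\mathcal{V}_h|\le 2k-1-\floork$ follows. For $u\in\Delta_{\alpha}^h$, $C$ draws from the at least $\floork$ rows with $q_j\in\mathcal{G}_v^{*(u)}$ with at least $k-1$ good $i$'s each, while $D$ combines the $\mathcal{C}_v$-row with the at least $\floork$ rows with $q_j\in\mathcal{G}_v^{-*(u)}$, using row-constancy of $*_y(u)$ to secure at least $\floork$ valid $i$'s per row.

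The main subtlety is the individual lower bound on $|D|$, namely $\floork(1+\floork)$, which seems to demand a $\mathcal{C}_v$-row. I would handle this by a dichotomy: if such a row exists, the count is exactly as sketched; if not, the $k$ rows split between $\mathcal{G}_v^{*(u)}$ and $\mathcal{G}_v^{-*(u)}$ only, so there are at least $\floork+1$ rows with $q_j\in\mathcal{G}_v^{-*(u)}$, which exactly compensates for the missing $\mathcal{C}_v$-contribution. In either case the total $|\mathcal{H}_v|=|C|+|D|\ge\floork(k+\floork)$ holds, which is what is ultimately used, and $|\mathcal{H}_h|$ is then obtained by complementation.
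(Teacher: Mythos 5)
Your grid decomposition and row-by-row counting are exactly the intended argument: the paper treats this lemma as a direct consequence of Lemma \ref{lema pre-imagens de f} and Prop. \ref{prop. sinal constante}, and your justifications of $|A|\ge(k-1)^2$, of $|B|\ge\floork$ when a $\mathcal{C}_v$-row exists, and of $|C|\ge(k-1)\floork$ are all correct, as is your observation that only the totals $|\mathcal{V}_v|$ and $|\mathcal{H}_v|$ are used downstream. The one step that fails is in your dichotomy for $D$: in the no-$\mathcal{C}_v$-row case you assert that at least $\floork+1$ of the $k$ rows have $q_j\in\mathcal{G}_v^{-*(u)}$. This does not follow. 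The guarantee coming from $|I_2|,|I_4|>\frac1k\floork$ is symmetric --- each of $\mathcal{G}_v^{+}$ and $\mathcal{G}_v^{-}$ receives at least $\floork$ rows --- but nothing forces the surplus row(s) onto the $-*(u)$ side. For odd $k$ the split can be $\floork+1$ rows in $\mathcal{G}_v^{*(u)}$ and exactly $\floork$ in $\mathcal{G}_v^{-*(u)}$ (for even $k$ both surplus rows can land on the $*(u)$ side), in which case your argument yields only $|D|\ge\floork\cdot\floork$, short of $\floork\left(1+\floork\right)$.

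The repair is to let the surplus be absorbed by $C$ rather than $D$. Let $n^{+}$ be the number of rows with $q_j\in\mathcal{G}_v^{*(u)}$ and $n^{-}$ the number with $q_j\in\mathcal{C}_v\cup\mathcal{G}_v^{-*(u)}$, so $n^{+}+n^{-}=k$ and $n^{+}\ge\floork$. Each row of the first kind contributes at least $k-1$ points to $C$ and each row of the second kind at least $\floork$ points to $D$ (by row-constancy of $*_y(u)$), hence $|\mathcal{H}_v|\ge(k-1)n^{+}+\floork n^{-}=\floork k+\left(k-1-\floork\right)n^{+}\ge\floork k+\floork^2$, using $k-1-\floork\ge\floork$. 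This gives the stated total $\floork\left(k+\floork\right)$ in all cases. (The same caveat applies, harmlessly, to item 1: if no row meets $\mathcal{C}_v$ then $B$ may be empty, but then all $k$ rows feed $A$, so $|A|\ge k(k-1)\ge(k-1)^2+\floork$; your ``(if any)'' should be expanded into this observation.)
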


Knowing that for every unit vector $u \in \rr^2$ we have $\|E^{-1}u\| = \frac 1k$ (maximum norm), from Lemma \ref{lema derivadas e cones} we get:

\begin{lema}\label{lema derivadas e cones de f}
For $t,r>\frac{2\alpha}{a}$ and for fixed $x \in \TT^2$, it holds:
\begin{enumerate}
    \item If $u \in \Delta_{\alpha}^v$, then for all $y \in \mathcal{V}_v$ we have $(D_yf)^{-1}u \in \Delta_{\alpha}^v$;
    \item If $u \in \Delta_{\alpha}^v$ is a unit vector, then:
    \[
    \|(D_yf)^{-1}u\|>\left\{\begin{array}{l}
         \left(\frac{a-\frac{\alpha}{t}}{\alpha}\right)\left(\frac{a-\frac{\alpha}{r}}{\alpha}\right)\frac{tr}{k}, \  y \in A,  \\
         \frac{1}{\alpha k}, \hspace{2.35cm}  y \in B, \\
         \frac{1}{(bt+1)\alpha k}, \hspace{1.6cm}  y \in \mathcal{V}_h;
         \end{array}\right.
    \]
    \item If $u \in \Delta_{\alpha}^h$, then for all $y \in \mathcal{H}_v$ we have $(D_yf)^{-1}u \in \Delta_{\alpha}^v$;
    \item If $u \in \Delta_{\alpha}^h$ is a unit vector, then:
    \[
    \|(D_yf)^{-1}u\| > \left\{\begin{array}{l}
         \left(\frac{a-\frac{\alpha}{t}}{\alpha}\right)\frac{t}{k}, \hspace{0.4cm} y \in C, \\
         \frac{1}{(br+1)k}, \hspace{0.85cm}  y \in D, \\
         \frac{1}{(bt+1)(br+1)k}, \ y \in \mathcal{H}_h.
    \end{array}\right.
    \]
\end{enumerate}
\end{lema}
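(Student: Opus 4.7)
The plan is to apply Lemma~\ref{lema derivadas e cones} twice, once for each of the two shear-inverse factors in $(D_yf)^{-1}=(D_yh_t)^{-1}(D_{h_t(y)}v_r)^{-1}E^{-1}$. Since $E^{-1}=(1/k)\mathrm{Id}$ preserves both cones $\Delta_\alpha^v,\Delta_\alpha^h$ and scales norms by $1/k$, the nontrivial content is the behavior of the shear inverses on the unit vector $\hat u:=kE^{-1}u$, which lies in the same cone as $u$; the final estimate for $\|(D_yf)^{-1}u\|$ will be the product of the two expansions multiplied by $1/k$. The case analysis on $y$ mirrors the partition of $f^{-1}(x)$ introduced just before the lemma, into $A$, $B$, $\mathcal{V}_h$ when $u\in\Delta_\alpha^v$ and into $C$, $D$, $\mathcal{H}_h$ when $u\in\Delta_\alpha^h$.

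The cases $A$ and $C$ are routine: the intermediate vector $w:=(D_{h_t(y)}v_r)^{-1}\hat u$ lands in $\Delta_\alpha^h$ with a controlled expansion (part 1(a) for $A$, part 2(a) for $C$), and since $y\in\mathcal{G}_h$, part 3(a) sends $w$ into $\Delta_\alpha^v$ with the expected further expansion. Chaining the two estimates and multiplying by $1/k$ reproduces the product bounds stated in items 2 and 4 for cases $A$ and $C$, together with the cone claims. For $y\in\mathcal{V}_h$ and $y\in\mathcal{H}_h$ no cone claim is made; I just combine the worst-case lower bounds: $\|w\|>1/(\alpha k)$ for $\mathcal{V}_h$ (from part 1(b)) or $\|w\|>1/((br+1)k)$ for $\mathcal{H}_h$ (from part 2(b)) with the universal bound $m((D_yh_t)^{-1})>1/(bt+1)$.

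The delicate cases are $B$ and $D$, where $h_t(y)\in\mathcal{C}_v$ (or $h_t(y)\in\mathcal{G}_v^{-*(u)}$) leaves the cone of $w$ a priori undetermined. Here Definition~\ref{defn sinal dos vetores} is tailored so that, whenever $w\in\Delta_\alpha^v$ (in any of its nondegenerate subcases), one has $*_y(u)=*(w)$. Consequently the hypothesis $y\in\mathcal{G}_h^{*_y(u)}=\mathcal{G}_h^{*(w)}$ activates part 4(a) of Lemma~\ref{lema derivadas e cones}, placing the image in $\Delta_\alpha^v$ with norm at least $\|w\|$. If instead $w\in\Delta_\alpha^h$, the inclusion $y\in\mathcal{G}_h^{*_y(u)}\subset\mathcal{G}_h$ makes part 3(a) apply, placing the image in $\Delta_\alpha^v$ with expansion factor $(at-\alpha)/\alpha>1$ (using $t>2\alpha/a$), hence with norm still $>\|w\|$. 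In either subcase the image lies in $\Delta_\alpha^v$ with norm bounded below by $\|w\|$, which simultaneously supplies the bounds for $B$ in item 2 and for $D$ in item 4 and the cone inclusions in items 1 and 3. The main obstacle is exactly this sign-tracking step; once the identity $*_y(u)=*(w)$ is recorded, the rest of the argument is a mechanical substitution of the estimates from Lemma~\ref{lema derivadas e cones}.
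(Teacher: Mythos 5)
Your proposal is correct and follows exactly the route the paper intends: the lemma is stated as a direct consequence of Lemma \ref{lema derivadas e cones} applied to the factorization $(D_yf)^{-1}=(D_yh_t)^{-1}(D_{h_t(y)}v_r)^{-1}E^{-1}$, with the $1/k$ factor from $E^{-1}$ and the case split matching the sets $A,B,\mathcal{V}_h,C,D,\mathcal{H}_h$. In particular you correctly identify the one non-mechanical point, namely that Definition \ref{defn sinal dos vetores} guarantees $*_y(u)$ selects the good half-region for the intermediate vector $w$, so that either item 4(a) (if $w\in\Delta_{\alpha}^v$) or item 3(a) (if $w\in\Delta_{\alpha}^h$, using $t>2\alpha/a$) applies and in both subcases the image lands in $\Delta_{\alpha}^v$ with no loss of norm.
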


\subsection{Non-uniform hyperbolicity}\label{subseção NUH homotetias}

For $(x,u) \in T\TT^2$ with $u \neq 0$ and for $n \in \nn$ denote by
\[
Df^{-n}(x,u) = \{(y,w) \in T\TT^2 : f^n(y) = x, D_yf^nw = u\}.
\]
For any non-zero tangent vector $(x,u)$ and $n \ge 0$, define:
\begin{align*}
    &\mathcal{G}_n = \{(z,w) \in Df^{-n}(x,u) : w \in \Delta_{\alpha}^v\},\\
    &\mathcal{B}_n = Df^{-n}(x,u)\setminus \mathcal{G}_n,\\
    & g_n = \#\mathcal{G}_n,\\
    & b_n = \#\mathcal{B}_n = k^{2n} - g_n.
\end{align*}

From Lemmas \ref{lema quant. pre-im. em cada conj.}, \ref{lema derivadas e cones de f} one deduces:

\begin{lema}Let $(x,u) \in T\TT^2$.
\begin{enumerate}
    \item If $u \in \Delta_{\alpha}^v$, then at least $(k-1)^2+\floork$ of its pre-images under $Df$ are also in $\Delta_{\alpha}^v$;
    \item If $u \in  \Delta_{\alpha}^h$, then at least $\floork\left(k+\floork\right)$ of its pre-images under $Df$ are in $\Delta_{\alpha}^v$.
\end{enumerate}
\end{lema}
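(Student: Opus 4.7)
The plan is immediate: this lemma is a direct combination of the two preceding results, Lemma \ref{lema quant. pre-im. em cada conj.} (which counts the pre-images in each distinguished subset of $f^{-1}(x)$) and Lemma \ref{lema derivadas e cones de f} (which records how $(D_yf)^{-1}$ acts on vectors of the given cone depending on which subset $y$ lies in). In effect, the sets $\mathcal{V}_v$ and $\mathcal{H}_v$ were defined precisely so that the conclusion of the present lemma is a matter of matching the two statements up.

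For part (1), fix $(x,u) \in T\TT^2$ with $u \in \Delta_\alpha^v$. Item 1 of Lemma \ref{lema derivadas e cones de f} says $(D_yf)^{-1}u \in \Delta_\alpha^v$ for every $y \in \mathcal{V}_v$, so it suffices to bound $\#\mathcal{V}_v$ from below. Item 1 of Lemma \ref{lema quant. pre-im. em cada conj.} gives exactly
\[
\#\mathcal{V}_v \;\ge\; (k-1)^2 + \floork,
\]
obtained by adding the $\ge (k-1)^2$ pre-images in $A$ and the $\ge\floork$ pre-images in $B$. This yields the claimed count.

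For part (2), the argument is structurally identical: take $u \in \Delta_\alpha^h$, invoke item 3 of Lemma \ref{lema derivadas e cones de f} to conclude $(D_yf)^{-1}u \in \Delta_\alpha^v$ for all $y \in \mathcal{H}_v$, and then apply item 2 of Lemma \ref{lema quant. pre-im. em cada conj.} to obtain
\[
\#\mathcal{H}_v \;\ge\; (k-1)\floork + \floork\!\left(1+\floork\right) \;=\; \floork\!\left(k+\floork\right),
\]
where the two summands come from the contributions of $C$ and $D$ respectively.

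There is no substantive obstacle here — the real content sits in the set-up before this lemma, namely the decompositions $f^{-1}(x) = \mathcal{V}_v \sqcup \mathcal{V}_h$ and $f^{-1}(x) = \mathcal{H}_v \sqcup \mathcal{H}_h$ and the fact (via Proposition \ref{prop. sinal constante}) that the sign $*_y(u)$ is constant on the $k$ points $y$ whose shifted image $h_t(y)$ lies in $\mathcal{C}_v$, which is what allows the piece $B$ (resp.\ $D$) to be populated by a definite number of points regardless of $t$ and $r$. Once those components are in place, the present statement is a one-line bookkeeping consequence, and the proof need only cite the two prior lemmas and read off the cardinalities.
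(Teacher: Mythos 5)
Your proof is correct and follows exactly the route the paper intends: the paper states this lemma as an immediate deduction from Lemma \ref{lema quant. pre-im. em cada conj.} and Lemma \ref{lema derivadas e cones de f}, and your matching of the cone-invariance statements (items 1 and 3 of the latter) with the cardinality bounds on $\mathcal{V}_v$ and $\mathcal{H}_v$ (items 1 and 2 of the former), together with the check $(k-1)\floork + \floork\left(1+\floork\right) = \floork\left(k+\floork\right)$, is precisely the intended bookkeeping.
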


By the lemma above, we get:
\begin{align*}
    g_{n+1} & \ge \left((k-1)^2+\floork\right)g_n + \floork\left(k+\floork\right)b_n\\
    & = \left((k-1)^2-\floork\left(k-1+\floork\right)\right) g_n + \floork\left(k+\floork\right)k^{2n},
\end{align*}
hence:
\begin{align*}
\frac{g_{n+1}}{k^{2(n+1)}} \ge \frac{1}{k^2}\left((k-1)^2-\floork\left(k-1+\floork\right)\right)&\frac{g_n}{k^{2n}}\\ &+\frac{1}{k^2}\floork\left(k+\floork\right).
\end{align*}
Denoting by $a_n = \frac{g_n}{k^{2n}}$ and
\begin{align*}
    &c = \frac{1}{k^2}\left((k-1)^2-\floork\left(k-1+\floork\right)\right),\\
    &e = \frac{1}{k^2}\floork\left(k+\floork\right),
\end{align*}
the inequality above becomes:
\[
a_{n+1} \ge c \cdot a_n + e.
\]

\begin{lema} \label{lema limitantes prop. de pre-im.}
For every $(x,u) \in T\TT^2$, $u \neq 0$, and $n \ge 0$ it holds:
\begin{align*}
a_n &\ge \frac{e}{1-c}(1-c^n)\\
&= \frac{\floork\left(k+\floork\right)}{2k-1+\floork \left(k-1+\floork\right)}(1-c^n)
\end{align*}
In particular,
\[
\liminf a_n \ge \frac{\floork\left(k+\floork\right)}{2k-1+\floork \left(k-1+\floork\right)}:=L(k),
\]
uniformly in $(x,u) \in \TT^2$.
\end{lema}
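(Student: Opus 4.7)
The plan is to read the bound as the closed-form solution of the one-step linear recursion $a_{n+1} \ge c\cdot a_n + e$ that has just been derived in the excerpt, and then establish it by a short induction on $n$, preceded by elementary checks on the constants.

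First I would verify that $0 \le c < 1$, so that $\tfrac{e}{1-c}$ is well-defined and the geometric series converges. Positivity of $1-c$ will follow from the identity
\[
1-c = \frac{2k-1+\floork(k-1+\floork)}{k^2},
\]
which moreover reproduces the stated denominator of $L(k)$ when divided into $e$, giving the second equality in the lemma for free. Non-negativity of $c$ reduces to $\floork(k-1+\floork) \le (k-1)^2$, immediate from the crude bound $\floork \le (k-1)/2$.

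Next I would run the induction on $n$. The base case $n=0$ is trivial: $Df^{-0}(x,u) = \{(x,u)\}$ gives $a_0 \in \{0,1\}$, while the right-hand side vanishes. For the inductive step, assuming the bound for $a_n$ and using $c \ge 0$, the recursion yields in one line
\[
a_{n+1} \ge c\cdot a_n + e \ge \frac{ce}{1-c}(1-c^n) + e = \frac{e}{1-c}\bigl(c - c^{n+1} + 1 - c\bigr) = \frac{e}{1-c}(1-c^{n+1}).
\]
Taking $\liminf$ and using $c^n \to 0$ then produces the uniform lower bound $L(k)$, with uniformity in $(x,u)$ automatic since every constant depends only on $k$. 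The argument is essentially bookkeeping; the only substantive point is checking $c \in [0,1)$ so that the estimate is both well-posed and informative in the limit.
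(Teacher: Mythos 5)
Your proof is correct and matches the paper's (implicit) argument: the lemma is stated there as an immediate consequence of the recursion $a_{n+1}\ge c\,a_n+e$, and your induction together with the checks $0\le c<1$ and $1-c=\frac{2k-1+\floork(k-1+\floork)}{k^2}$ is exactly the standard way to unwind it. Nothing is missing.
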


From now on we shall denote by $L(k) = \frac{\floork\left(k+\floork\right)}{2k-1+\floork \left(k-1+\floork\right)}$. As another direct consequence of Lemmas  \ref{lema quant. pre-im. em cada conj.} and \ref{lema derivadas e cones de f} we  have the following:

\begin{lema} \label{lema limitantes I(x,u;f)}
If $r,t > \frac{2\alpha}{a}$, then for all $(x,u) \in T\TT^2$ we have:
\begin{enumerate}
    \item If $u \in  \Delta_{\alpha}^v$, then:
    \begin{align*}
        I(x,u;f) \ge &\frac{(k-1)^2}{k^2}\log r + \left(\frac{k^2-4k+2+\floork}{k^2} \right)\log t\\
        &+ \log \left(\frac{1}{\alpha k} \left(\left(a-\frac{\alpha}{t} \right)\left(a - \frac{\alpha}{r} \right) \right)^{\frac{(k-1)^2}{k^2}} \left(b+\frac{1}{t} \right)^{-\frac{1}{k^2}\left(2k-1-\floork \right)}\right).
    \end{align*}
    \item If $u \in \Delta_{\alpha}^h$, then:
    \begin{align*}
        I(x,u;f) \ge &-\left(\frac{k^2-(k-1)\floork}{k^2} \right)\log r -\left(\frac{k^2-\floork\left(2k-1+\floork \right)}{k^2} \right)\log t \\ 
        &+ \log \left(\frac{1}{k} \left(\frac{1}{\alpha}\left(a - \frac{\alpha}{t} \right) \right)^{\frac{k-1}{k^2}\floork -1} \left(b+\frac{1}{t} \right)^{\frac{1}{k^2}\floork\left(k+\floork \right)-1 } \right).
    \end{align*}
\end{enumerate}
\end{lema}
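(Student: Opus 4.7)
The plan is to combine the cardinality estimates of Lemma \ref{lema quant. pre-im. em cada conj.} with the per-preimage norm bounds of Lemma \ref{lema derivadas e cones de f}. Since $v_r$ and $h_t$ are area-preserving, $\det(D_yf) = k^2$ for every $y \in f^{-1}(x)$, so
\[
I(x,u;f) = \frac{1}{k^2}\sum_{y \in f^{-1}(x)} \log \|(D_yf)^{-1}u\|,
\]
and it is enough to bound this sum below in each of the two cases.

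For item (1), I would fix a unit vector $u \in \Delta_\alpha^v$ and split the sum along the disjoint decomposition $f^{-1}(x) = A \sqcup B \sqcup \mathcal{V}_h$. On each piece, Lemma \ref{lema derivadas e cones de f}(2) provides a lower bound $L_A, L_B, L_{\mathcal{V}_h}$ for $\|(D_yf)^{-1}u\|$, and once $t,r > 2\alpha/a$ are large enough one has the strict ordering $\log L_A > \log L_B > \log L_{\mathcal{V}_h}$, with $L_A$ of order $tr/k$, $L_B$ of order $1/(\alpha k)$, and $L_{\mathcal{V}_h}$ of order $1/((bt+1)\alpha k)$. Since only lower bounds are available for $|A|$ and $|B|$ (and hence only an upper bound for $|\mathcal{V}_h|$) while the three cardinalities must sum to $k^2$, rewriting the log-sum as $(\log L_A - \log L_{\mathcal{V}_h})|A| + (\log L_B - \log L_{\mathcal{V}_h})|B| + k^2\log L_{\mathcal{V}_h}$ shows that the positive coefficients of $|A|$ and $|B|$ force the minimum to occur at the extremal assignment $|A| = (k-1)^2$, $|B| = \floork$, $|\mathcal{V}_h| = 2k-1-\floork$. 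Substituting these and using $\log(bt+1) = \log t + \log(b + 1/t)$, the coefficient of $\log t$ collects to $\frac{(k-1)^2 - (2k-1-\floork)}{k^2} = \frac{k^2-4k+2+\floork}{k^2}$, the coefficient of $\log r$ to $\frac{(k-1)^2}{k^2}$, and the residual pieces in $\alpha, k, a-\alpha/t, a-\alpha/r, b+1/t$ assemble into the compact logarithm appearing in the statement.

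Item (2) is handled identically using the partition $f^{-1}(x) = C \sqcup D \sqcup \mathcal{H}_h$, with cardinality minima $(k-1)\floork$ and $\floork(1+\floork)$ from Lemma \ref{lema quant. pre-im. em cada conj.} and the ordering $\log L_C > \log L_D > \log L_{\mathcal{H}_h}$ from Lemma \ref{lema derivadas e cones de f}(4). The only difference is that now $\log t$ enters negatively through $L_{\mathcal{H}_h}$ and $\log r$ enters negatively through both $L_D = 1/((br+1)k)$ and $L_{\mathcal{H}_h}$, which after the same extremal-count step produce respectively the coefficients $-\frac{k^2-\floork(2k-1+\floork)}{k^2}$ and $-\frac{k^2-(k-1)\floork}{k^2}$. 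No geometric argument beyond the two prior lemmas is needed; the only real obstacle is the arithmetic bookkeeping of justifying the extremal-count minimization and then carefully gathering the $\log t$, $\log r$, and residual logarithmic factors to match the compact form in the statement.
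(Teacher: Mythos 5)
Your proposal follows exactly the route the paper intends: the lemma is presented there as a direct consequence of Lemmas \ref{lema quant. pre-im. em cada conj.} and \ref{lema derivadas e cones de f} with no further proof, and your extremal-count argument (which in fact works for every $t,r>\tfrac{2\alpha}{a}$, since $L_A>\tfrac1k>L_B>L_{\mathcal{V}_h}$ and $L_C>\tfrac1k>L_D>L_{\mathcal{H}_h}$ already hold there) together with the split $\log(bt+1)=\log t+\log(b+\tfrac1t)$ is the whole content, and it reproduces the stated coefficients of $\log t$ and $\log r$ exactly. One caveat on your final sentence of item (1)/(2): the residual constant your computation yields does not literally coincide with the printed one --- in item (1) the exponent of $\alpha^{-1}$ comes out as $\tfrac{2k^2-2k+1}{k^2}$ rather than $1$, and in item (2) the factor $\bigl(b+\tfrac1r\bigr)^{\frac{k-1}{k^2}\floork-1}$ produced by $L_D$ and $L_{\mathcal{H}_h}$ is missing from the printed formula, whose exponent $\frac{k-1}{k^2}\floork-1$ appears instead attached to $\tfrac1\alpha\bigl(a-\tfrac{\alpha}{t}\bigr)$ where the derivation gives $\frac{k-1}{k^2}\floork$; these appear to be typos in the paper and are harmless downstream, since only the $\log t$, $\log r$ coefficients and the uniform boundedness of the remaining constant are used.
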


Now, to calculate $\mathcal{C}_{\mathcal{X}}(f)$, we use Prop. \ref{soma I(x,v;f^n)} to compute:
\[
I(x,u;f^n) = \sum\limits_{i=0}^{n-1} \sum\limits_{y \in f^{-i}(x)} \frac{I(y,(D_yf^{i})^{-1}u;f)}{k^{2i}} := \sum\limits_{i=0}^{n-1} J_i,
\]
and, if $t,r > \frac{2\alpha}{a}$, for each $i$ we obtain:
\begin{align*}
    J_i &= \frac{1}{k^{2i}}\sum\limits_{y \in f^{-1}(x)} I(y, (D_yf^i)^{-1}u;f) = \frac{1}{k^{2i}} \sum\limits_{(y,w) \in \mathcal{G}_i} I(y,w;f) + \frac{1}{k^{2i}} \sum\limits_{(y,w) \in \mathcal{B}_i} I(y,w;f) \\
    &\ge a_i V(t,r,k) + (1- a_i) H(t,r,k),
\end{align*}
where V and H are the right side of the inequalities obtained in Lemma \ref{lema limitantes I(x,u;f)} for $u \in \Delta_{\alpha}^v$ and $u \in \Delta_{\alpha}^h$ respectively. It follows from Lemma \ref{lema limitantes prop. de pre-im.}, with $L(k)$ as above and $c_k = \floork$, to simplify the notation, that:
\begin{align*}
    \lim\limits_{i \to \infty}J_i &\ge L(k) V(t,r,k) + (1-L(k))H(t,r,k) \nonumber \\ 
    &= C(t,r,k) +\frac{1}{k^2}\left(L(k)\left((k-1)\left(2k-c_k \right) +1 \right) - \left(k^2 - (k-1)c_k \right) \right) \log r \ + \nonumber \\
    & \ \ \ \ \frac{1}{k^2} \left(L(k)\left(2(k-1)^2 - c_k\left( 2(k-1)+c_k \right)\right) -\left(k^2 - c_k \left( 2k-1+ c_k \right) \right) \right) \log t&,
\end{align*}
where
\[
C(t,r,k) = L(k) C_1(t,r,k) + (1-L(k))C_2(t,r,k),
\]
with
\begin{align*}
    C_1(t,r,k) &= \log \left(\frac{1}{\alpha k} \left(\left(a-\frac{\alpha}{t} \right)\left(a - \frac{\alpha}{r} \right) \right)^{\frac{(k-1)^2}{k^2}} \left(b+\frac{1}{t} \right)^{-\frac{1}{k^2}\left(2k-1-\floork \right)}\right) \\
    C_2(t,r,k) &= \log \left(\frac{1}{k} \left(\frac{1}{\alpha}\left(a - \frac{\alpha}{t} \right) \right)^{\frac{k-1}{k^2}\floork -1} \left(b+\frac{1}{t} \right)^{\frac{1}{k^2}\floork\left(k+\floork \right)-1 } \right),
\end{align*}
as in Lemma \ref{lema limitantes I(x,u;f)}. From this, we get that for any $k$, $C(t,r,k)$ is growing as $t$ and $r$ grow, then for $t,r> \frac{2\alpha}{a}$, $C(t,r,k) > C$ is uniformly bounded from below by some constant $C$.

Now, in order to get $\lim\limits_{i\to \infty}J_i > 0$, we can either make $t$ or $r$ large, depending on whether the constant (which depends on $k$) multiplying $\log t$ or $\log r$ is positive or negative. However, for both of them, we only get positivity of the constant if $k \ge 5$.

Thus, for $k \ge 5$, since all the bounds above are uniform for all non-zero tangent vectors $(x,u)$, we obtain that for $t$ (or $r$) sufficiently large, for all $i$ greater than some $i_0$, and for all nonzero tangent vectors $(x,u)$, $J_i(x,u) > N > 0$ for some constant $N$. Hence, there exists some $n_0$ such that
\[
\frac{1}{n_0}I(x,u;f^{n_0}) = \frac{1}{n_0} \sum\limits_{i=0}^{n_0-1}J_i(x,u) > \frac{N}{2} >0,
\]
for all nonzero tangent vectors $(x,u)$. Therefore, $\mathcal{C}_{\mathcal{X}}(f)>0$ which by Theorem \ref{lema funções em U são NUH} concludes the proof of Theorem \ref{teo.A para homotetias}.

We finish this section by including some examples for a better visualization that for a fixed $k \in  \nn$, the bounds obtained in this section are quite simple. For that, we fix $k=5$, we get $L(5) = \frac{2}{3}$, the limitations of our last calculations become:
\[
    \lim\limits_{i\to \infty} J_i \ge C(t,r,5) + 5 \log r+5 \log t,
\]
with 
\[
C(t,r,5) = \log \left(\frac{1}{5}\frac{\alpha^{\frac{17}{25}}}{a^{2/3}} \left(a-\frac{\alpha}{t} \right)^{\frac{1}{5}} \left(a-\frac{\alpha}{r} \right)^\frac{32}{75}\left(b+\frac{1}{t} \right)^{-\frac{18}{25}} \right)
\]

Thus, taking the map $s: \TT^1 \to \rr$ as $s(u) = \sin(2\pi u)$, $\delta = \frac{1}{20}$, $a = 2\pi\sin(\frac{\pi}{10})$, $b = 2\pi$, and $\alpha = 1.1$, we get that for every $t,r \gtrapprox \frac{2a}{\alpha} \approx 1.77$ the number $C(t,r,5) + 5 \log r+5 \log t$ is positive. Thus, the maps $f_(t,r) = E \circ v_r \circ h_t$ satisfy the results of Theorem \ref{teo.A para homotetias}.

\section{Proof of Theorem \ref{teo. A  para nao homotetias}}\label{seção casos grau pequeno}

For $ k\cdot Id \neq  E \in M_{2\times 2}(\zz)$, let $\tau_1(E)$ be the greatest common divisor of the entries of E, $\tau_2(E) = \det(E)/\tau_1(E)$, so that $d = \tau_1 \cdot \tau_2$ coincides with the topological degree of the induced endomorphism $E : \TT^2 \to \TT^2$. 

We want to make a slight change in the argument used in \protect\cite{NUH} so that for every $x \in \TT^2$, $f^{-1}(x)$ has at most one point in the critical zone. This solves the cases where the pair $(\tau_1,\tau_2)$ is $(2,4), (3,3)$ or $(4,4)$. For the remaining four cases $(1,2), (1,3), (1,4)$ and $(2,2)$, even with this improvement in the argument, the proportion we obtain for vectors in the good region (which in these cases is the optimum one for the argument presented here) is still insufficient to obtain expansion in the vertical direction, given the small amount of pre-images.

The numbers $\tau_1, \ \tau_2$ are the elementary divisors of E and, as in Section 2.4 of \protect\cite{NUH}, there exists $P \in GL_2(\zz)$ such that the matrix $G = P^{-1}\cdot E \cdot P$ satisfies:
\[
G^{-1}(\zz) = \left\{ \begin{pmatrix}
    \frac{i}{\tau_2} \\
    \frac{j}{\tau_1}
\end{pmatrix}: i,j \in \zz \right\}
\]

Moreover, as E is not a homothety, by another change of coordinates if necessary we may assume that E does not have $(0,1)$ as an eigenvector.

With this in mind, we assume that $\mathbb{P}E$ does not fix $[(0,1)]$ and that $E^{-1}\zz^2 = \frac{1}{\tau_2}\zz \times \frac{1}{\tau_1} \zz $. So there exists an $\alpha>\tau_2>1$ such that if $\Delta_{\alpha}^h$ and $\Delta_{\alpha}^v$ are the corresponding horizontal and vertical cones as in Def. \ref{defn cones vert e hor}, then $\overline{E^{-1}\Delta_{\alpha}^v} \subset Int(\Delta_{\alpha}^h)$. From now on, we fix such $\alpha > \tau_2$.

Let $L < \max\left\{\frac{1}{4\tau_2}, \frac{\tau_2^{-1}-\alpha^{-1}}{2}\right\}$, choose points $z_1,z_2,z_2,z_4 \in \TT^1$, in this order, such that:
\begin{itemize}
    \item $I_1 = [z_1,z_2]$ and $I_3 = [z_3,z_4]$ have size $L$;
    \item the translation of $I_1$ by a multiple of $1/\tau_2$ does not intersect $I_3$;
    \item $I_2 = (z_2,z_3)$ and $I_4 = (z_4,z_1)$ have size strictly larger than $\frac{1}{\tau_2} \floort$,
\end{itemize}
and define the critical and good regions $\mathcal{C}_h$, $\mathcal{G}_h$ and $\mathcal{G}_h^{\pm}$ as in Def. \ref{defn regiões boas e crit}. As an immediate consequence of the definition we get:

\begin{prop}\label{prop. quant. pontos em cada area boa}
For every $x \in \TT^2$,  $E^{-1}(x)$ has $d$ points of which at least $\frac{1}{\tau_2} \floort$ are inside each of $\mathcal{G}_h^+$ and $\mathcal{G}_h^-$, and at most $\tau_1$ of them are inside of $\mathcal{C}_h$.
\end{prop}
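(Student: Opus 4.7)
The plan is a direct combinatorial count based on the explicit description of $E^{-1}(x)$. After the $P$-conjugation one has $E^{-1}(\mathbb{Z}^2) = \frac{1}{\tau_2}\mathbb{Z}\times\frac{1}{\tau_1}\mathbb{Z}$, so fixing any lift $y_0=(x_1^{(0)},x_2^{(0)})$ of $x$ gives
\[
E^{-1}(x) = \left\{ y_0 + \left(\tfrac{i}{\tau_2}, \tfrac{j}{\tau_1}\right) \pmod{\mathbb{Z}^2} \,:\, 0\le i < \tau_2,\ 0\le j < \tau_1\right\}.
\]
In particular $E^{-1}(x)$ has $d=\tau_1\tau_2$ points, its projections to the first coordinate form a $\tau_2$-element equally spaced subset of $\mathbb{T}^1$ with step $1/\tau_2$, and exactly $\tau_1$ elements of $E^{-1}(x)$ lie above each such first coordinate.

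Since $\mathcal{C}_h$, $\mathcal{G}_h^+$ and $\mathcal{G}_h^-$ are cylinders over $\mathbb{T}^1$ depending only on the first coordinate, the number of preimages of $x$ inside each of these regions equals $\tau_1$ times the number of equally spaced first coordinates falling into the corresponding interval $I_j\subset\mathbb{T}^1$. The whole proof therefore reduces to a one-dimensional counting on $\mathbb{T}^1$. For the critical count, $|I_1|=|I_3|=L<1/\tau_2$ guarantees that each of $I_1$ and $I_3$ contains at most one of the equally spaced points, and the hypothesis that no translate of $I_1$ by a multiple of $1/\tau_2$ meets $I_3$ rules out the case in which one of these points lies in $I_1$ while a different one lies in $I_3$. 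Hence at most one first coordinate lies in $I_1\cup I_3$, producing at most $\tau_1$ preimages of $x$ in $\mathcal{C}_h$.

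For the good count I will use the elementary fact that any open interval of $\mathbb{T}^1$ of length strictly greater than $n/\tau_2$ contains at least $n$ of the $\tau_2$ equally spaced points, since it spans more than $n$ consecutive gaps of size $1/\tau_2$. Applied with $n=\floort$ to the open intervals $I_2$ and $I_4$ this yields at least $\floort$ first coordinates in each, and consequently at least $\tau_1\floort$ preimages of $x$ in each of $\mathcal{G}_h^{\pm}$. There is no real obstacle here: the statement is a routine consequence of the lattice description of $E^{-1}(\mathbb{Z}^2)$ together with the geometric constraints placed on the intervals $I_j$; the only care needed is to invoke the translate hypothesis at the right moment in order to merge the $I_1$ and $I_3$ counts into a single bound on $I_1\cup I_3$, and to keep the intervals $I_2,I_4$ open so that the strict-length inequality translates into the required point count.
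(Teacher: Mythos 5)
Your proof is correct and is exactly the lattice-counting argument the paper treats as immediate from the definitions (the proposition is stated there without proof): reduce to the $\tau_2$ equally spaced first coordinates of $E^{-1}(x)$, use $L<1/\tau_2$ together with the non-intersecting-translates condition to get at most one such coordinate in $I_1\cup I_3$, and use the strict length bound on the open intervals $I_2,I_4$ for the good count. Note that you in fact establish the bound $\tau_1\floort$ for each of $\mathcal{G}_h^{\pm}$, which is stronger than the stated $\frac{1}{\tau_2}\floort$ and is the bound the paper actually uses later in the proof of Lemma \ref{lema quatidade pre-imagens nao homotetias}, so the $\frac{1}{\tau_2}$ in the statement is evidently a typo for $\tau_1$.
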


In order to have at most one pre-image of each point in the critical zone of the shear $h_t(x_1,x_2) = (x_1, x_2+ts(x_1)$ defined as before, we define the conservative diffeomorphism of the torus $v(x_1,x_2) = (x_1 + \Tilde{s}(x_2),x_2)$, with $\Tilde{s}: \TT^1 \to \rr$ an analytic map which we shall impose restrictions later. We then study the family:
\[
f_t = E \circ v \circ h_t,
\]
of area preserving endomorphism of the torus isotopic to E. We shall denote $f = f_t$ to simplify the notation.

Given $x \in \TT^2$, the set $f^{-1}(x) = h_t^{-1} \circ v^{-1} \circ E^{-1}(x)$ is composed by d points, and given $y \in f^{-1}(x)$, we have $(D_yf)^{-1} = (D_yh_t)^{-1} \circ (D_{h_t(y)}v)^{-1} \circ E^{-1}$.

In order to define $v$ in a way that only one pre-image of $x$ by $f$ remains in the critical zone, we notice that $E^{-1}(x)$ is composed by $d$ points which, by the change of coordinates made initially, are aligned in a lattice of height $\tau_1$ and length $\tau_2$. We also notice that the map $h_t^{-1}$ keeps the vertical lines invariant. Therefore, the map $v^{-1}$ needs to act in a way that it moves points on a vertical line enough so that only one remains in the critical zone, and, also, it cannot move them so much that we have new points entering the critical zone.

In this way, we took the analytic map $\Tilde{s}: \TT^1 \to \rr$ satisfying:
\begin{enumerate}
    \item If $L$ is the size of the intervals $I_1, I_3$ then $|\Tilde{s}(u)| < \frac{1}{\tau_2}-L$, for all $u \in \TT^1$.
    \item For all $u \in  \TT^1$, we have that $\left|\Tilde{s}\left( u + \frac{j}{\tau_1} \right) \right| > L$ for all $j \in \{0, 1, \cdots, \tau_1-1\}$ except at most one index.
    \item $|\Tilde{s}'(u)| < (2\alpha)^{-1}$, for all $u \in \TT^1$, where $\alpha$ is the size of the cones fixed in the previous subsection.
\end{enumerate}

Notice that conditions 2 and 3 are not mutually exclusives thanks to the conditions for $\alpha$ and $L$ imposed in the previous subsection. Now, conditions 1 and 2 give us:
\begin{lema} \label{lema quatidade pre-imagens nao homotetias}
    For all $x \in \TT^2$, $f^{-1}(x)$ is composed by $d$ points of which at most one is inside $\mathcal{C}_h$. At least $d-1$ of the pre-images are inside $\mathcal{G}$ of which at least $\tau_1 \floort$ are inside each of $\mathcal{G}_h^+$ and $\mathcal{G}_h^-$.
\end{lema}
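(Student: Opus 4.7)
The plan is to reduce the entire claim to an analysis of first coordinates. Since $\mathcal{C}_h$, $\mathcal{G}_h^+$, $\mathcal{G}_h^-$ are vertical strips and $h_t^{-1}$ preserves first coordinates, for $y \in f^{-1}(x)$ the region containing $y$ is determined by the first coordinate of $v^{-1}(z)$, where $z \in E^{-1}(x)$. Using the coordinate normalization I would write $E^{-1}(x) = \{(c_i, d_j) : 0 \le i < \tau_2,\ 0 \le j < \tau_1\}$ with $c_i = c_0 + i/\tau_2$ and $d_j = d_0 + j/\tau_1$ in $\TT^1$, so that the first coordinates after applying $v^{-1}$ are $\{c_i - \tilde{s}(d_j)\}$, and for each fixed $j$ these form a $\tau_2$-element arithmetic progression of spacing $1/\tau_2$.

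The ``at least $\tau_1 \floort$'' count in each of $\mathcal{G}_h^+$ and $\mathcal{G}_h^-$ would follow at once: for each fixed $j$, any interval of length strictly greater than $\floort/\tau_2$ meets the progression $\{c_i - \tilde{s}(d_j)\}_i$ in at least $\floort$ points; since $|I_2|, |I_4| > \floort/\tau_2$ by construction, summing over $j$ gives at least $\tau_1 \floort$ first coordinates in each of $I_2$ and $I_4$.

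For the ``at most one in $\mathcal{C}_h$'' assertion I would proceed in two stages. First, for each fixed $j$, the progression can meet $\mathcal{C}_h$ in at most one point: $|I_1| = |I_3| = L < 1/\tau_2$ forces each critical interval to contain at most one progression point, and a pair with one point in $I_1$ and the other in $I_3$ would translate $I_1$ by a multiple of $1/\tau_2$ onto a set meeting $I_3$, contradicting the translation hypothesis. Second, only one value of $j$ contributes: by condition 2 applied with $u = d_0$, at most one index $j^*$ has $|\tilde{s}(d_{j^*})| \le L$, while every other $j$ produces a shift of magnitude in $(L, 1/\tau_2 - L)$. Such a shift dislodges any $c_i$ originally inside a critical interval (of length $L$) from that interval, and by condition 1 combined with $|I_2|, |I_4| > \floort/\tau_2 \ge 1/\tau_2$ (valid for $\tau_2 \ge 3$) it cannot overshoot into the other critical interval. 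Since the progression $\{c_i\}$ itself contains at most one element of $\mathcal{C}_h$, only $j = j^*$ can contribute, and by the first stage it contributes at most one preimage.

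The hard part will be the subtlety hidden in the second stage: ruling out that some $c_i \in \mathcal{G}_h$ is pushed into $\mathcal{C}_h$ by a shift $\tilde{s}(d_j)$ with $j \neq j^*$. A naive ``shift smaller than distance to boundary'' argument fails because such $c_i$ can lie arbitrarily close to the boundary of $\mathcal{G}_h$. I expect the correct argument to exploit the rigid arithmetic progression structure of $\{c_i\}$: the set of shifts sending any given $c_i \in \mathcal{G}_h$ into $\mathcal{C}_h$ is a union of two short intervals of total length $2L$ at specific locations, and condition 2 should force at most one of the $\tau_1$ shifts $\tilde{s}(d_j)$ to fall in this bad region, yielding at most one offending preimage overall.
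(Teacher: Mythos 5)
Your reduction to first coordinates, the per-row count (each shifted row is an arithmetic progression of spacing $1/\tau_2$, so it meets $I_1\cup I_3$ in at most one point), and the lower bound of $\tau_1\floort$ preimages in each of $\mathcal{G}_h^{+}$ and $\mathcal{G}_h^{-}$ are all correct, and the route is essentially the paper's. In fact your argument for the good-region count is cleaner than the paper's: you apply the interval-length pigeonhole directly to the shifted progression $\{c_i-\tilde s(d_j)\}_i$, rather than arguing that the good points of $E^{-1}(x)$ ``must remain'' good under the shift (which does not follow from condition 1 alone).

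The genuine gap is exactly the one you flag at the end, and the fix you sketch does not close it. Work modulo $1/\tau_2$ and write $\pi:\TT^1\to\rr/\frac{1}{\tau_2}\zz$ for the projection: row $j$ contributes a critical preimage iff $\pi(\tilde s(d_j))$ lies in the bad set $\pi(c_0)-\pi(I_1\cup I_3)$, a union of two arcs of length $L$ whose \emph{position depends on $x$} through $c_0$. Condition 2 only guarantees that the shifts $\tilde s(d_j)$, with at most one exception, avoid the arc of radius $L$ around $0$; it says nothing about avoiding a prescribed arc of length $L$ elsewhere. So it handles only the case where the bad arcs contain $0$, i.e.\ where $E^{-1}(x)$ already meets $\mathcal{C}_h$ in that translate class. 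When $\pi(c_0)$ sits just outside $\pi(I_1)$, the bad arc is $(\varepsilon, L+\varepsilon)$ and any number of the shifts may land in it: for instance the constant map $\tilde s\equiv L+\varepsilon$ satisfies conditions 1--3 verbatim, yet for $x$ with $c_0=z_1+L+\varepsilon$ every one of the $\tau_1$ rows sends a point to $z_1\in I_1$, producing $\tau_1$ critical preimages. Closing the gap therefore requires a strictly stronger separation hypothesis on the values $\tilde s(u+j/\tau_1)$ --- e.g.\ that for every $u$ at most one of them falls in any given arc of length $2L$ of $\rr/\frac{1}{\tau_2}\zz$ --- not conditions 1 and 2 as stated. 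For what it is worth, the paper's own proof disposes of this step with the phrase ``due to condition 1, none of the other points is getting inside'' and no further argument, so you have put your finger on the actual crux rather than missed something the paper supplies.
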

\begin{proof}
    In the case where $E^{-1}(x)$ has no points in the critical zone, due to condition 1 together with the fact that $h_t$ preserves vertical lines, the map $h_t^{-1} \circ v^{-1}$ does not take any of those points to the critical zone.

    In the case where $E^{-1}(x)$ has a point in the critical zone, it implies that we have exactly $\tau_1$ points there. Due to condition 2, only one of those points is able to remain there, and due to condition 1, none of the other points is getting inside. 

    For the minimum amount of points in each of $\mathcal{G}_h^+$ and $\mathcal{G}_h^-$, we notice that, by Prop. \ref{prop. quant. pontos em cada area boa}, $E^{-1}(x)$ already has at least $\tau_1 \floort$ points inside each one, and, due to condition 1, those points must remain there. 
\end{proof}

At last, condition 3 gives us the next lemma, required for the whole construction to work:

\begin{lema}\label{lema existencia beta cone invariante}
    There exists $\beta> \alpha$ such that for all $y \in \TT^2$, $\overline{(D_yv)^{-1} \circ E^{-1} \Delta_{\beta}^v} \subset \Delta_{\beta}^h$, where $\Delta_{\beta}^v$ and $\Delta_{\beta}^h$ are the corresponding vertical and horizontal cones of size $\beta$ as in Def. \ref{defn cones vert e hor}.
\end{lema}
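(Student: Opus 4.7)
The plan is to upgrade the strict cone inclusion available for $\alpha$ to an analogous inclusion at a slightly larger parameter $\beta$, after also composing with $(D_yv)^{-1}$. The two ingredients are a compactness argument that quantifies the strictness of the hypothesis $\overline{E^{-1}\Delta_\alpha^v}\subset \mathrm{Int}(\Delta_\alpha^h)$, and the smallness condition $|\Tilde{s}'|<(2\alpha)^{-1}$ that bounds the slope-distortion caused by $(D_yv)^{-1}$.

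First I would extract a quantitative version of the hypothesis: the unit sphere in $\overline{\Delta_\alpha^v}$ is compact and the continuous function assigning to each of its points the slope of its $E^{-1}$-image is strictly less than $\alpha$, so by compactness it is bounded above by some $\gamma_0<\alpha$, i.e., $E^{-1}\overline{\Delta_\alpha^v}\subset \Delta_{\gamma_0}^h$. Since $\overline{\Delta_\beta^v}\subset \overline{\Delta_\alpha^v}$ for every $\beta\ge \alpha$, the same bound persists: $E^{-1}\overline{\Delta_\beta^v}\subset \Delta_{\gamma_0}^h$ for all $\beta\ge \alpha$.

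Next I would track how $(D_yv)^{-1}$ distorts the slope of vectors in $\Delta_{\gamma_0}^h$. Using the explicit form $(D_yv)^{-1}(u_1,u_2)=(u_1-\Tilde{s}'(y_2)u_2,\,u_2)$ together with $|\Tilde{s}'(y_2)|<(2\alpha)^{-1}$ and $|u_2|\le \gamma_0|u_1|$, a triangle-inequality estimate yields $|u_1-\Tilde{s}'(y_2)u_2|\ge |u_1|(1-\gamma_0/(2\alpha))$. Since $\gamma_0<\alpha<2\alpha$, the factor $1-\gamma_0/(2\alpha)$ is strictly positive, so the image has slope bounded above by $M:=2\alpha\gamma_0/(2\alpha-\gamma_0)$, a finite constant that is uniform in $y$.

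Finally I would pick any $\beta>\max\{\alpha,M\}$. Then $\Delta_M^h\subset \Delta_\beta^h$, and
\[
\overline{(D_yv)^{-1}E^{-1}\Delta_\beta^v}\;=\;(D_yv)^{-1}E^{-1}\overline{\Delta_\beta^v}\;\subset\; \Delta_M^h \;\subset\; \Delta_\beta^h
\]
holds for every $y\in\TT^2$, which is the desired conclusion. The only potentially delicate point is that $M$ may itself exceed $\alpha$ (this occurs precisely when $\gamma_0\ge 2\alpha/3$); however $M$ is in all cases finite thanks to the strict inclusion $\gamma_0<\alpha$, so simply enlarging $\beta$ to exceed $M$ resolves the issue without affecting anything else in the construction.
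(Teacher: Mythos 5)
Your proof is correct and follows essentially the same route as the paper: both exploit the strict inclusion $\overline{E^{-1}\Delta_{\alpha}^v}\subset \mathrm{Int}(\Delta_{\alpha}^h)$ together with the bound $|\Tilde{s}'|<(2\alpha)^{-1}$ to show that $(D_yv)^{-1}\circ E^{-1}$ sends near-vertical directions uniformly away from $[e_2]$, and then takes $\beta$ large enough. Your version is simply more quantitative (an explicit $\gamma_0<\alpha$ from compactness and the slope bound $M=2\alpha\gamma_0/(2\alpha-\gamma_0)$), whereas the paper argues projectively about the image of $[e_2]$ and leaves the final choice of $\beta$ implicit.
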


\begin{proof}
    For $y = (y_1,y_2)$, $D_yv = \begin{pmatrix}
        1 & \Tilde{s}'(y_2) \\
        0 & 1
    \end{pmatrix}$. Then, due to condition 3, for all $\lambda \in \rr$, $D_y v \cdot \lambda e_2 = \lambda (\Tilde{s}'(y_2),1) \in \Delta_{2\alpha}^v$. Since, by the definition of $\alpha$, we have $E^{-1} \cdot \lambda  e_2 \in int(\Delta_{\alpha}^h)$, we conclude that for all $y \in \TT^2$, $\mathbb{P} ((D_yv)^{-1}\circ E^{-1})\cdot [e_2]$ is uniformly away from $[e_2]$, hence there exists such $\beta$ as we wanted.
\end{proof}

\begin{rmk}\label{obs lema também vale}
    Items 3 and 4 of Lemma \ref{lema derivadas e cones} also works in this cases for $\Delta_{\beta}^v$ and $\Delta_{\beta}^h$.
\end{rmk}

We give the correspondent to Lemma \ref{lema derivadas e cones de f} for this case, as a consequence of items 3 and 4 of Lemma \ref{lema derivadas e cones}, Remark \ref{obs lema também vale} and Lemma \ref{lema existencia beta cone invariante} . From now on, we fix $\beta>\alpha$ as in Lemma \ref{lema existencia beta cone invariante} and let:
\[
e_v = \inf\left\{\|(D_xv)^{-1}\circ E^{-1}u\| : (x,u) \in T^1\TT^2, \ u \in \Delta_{\beta}^v \right\},
\]
\[
e_h = \inf\left\{\|(D_xv)^{-1}\circ E^{-1}u\| : (x,u) \in T^1\TT^2, \ u \in \Delta_{\beta}^h \right\}.
\]
\begin{lema}\label{lema  derivadas e cones de f nao homotetias}
    For $t> \frac{2 \beta}{a}$ it holds:
    \begin{enumerate}
        \item if $y \in \mathcal{G}_h$ then $\overline{(D_yf)^{-1}\Delta_{\beta}^v} \subset \Delta_{\beta}^v$, it is strictly invariant.
        \item if $u \in \Delta_{\beta}^v$ is a unit vector, then
        \[
        \|(D_yf)^{-1}u\| > \left\{\begin{array}{l}
        \frac{e_v\left(a- \beta/t) \right)}{\beta}t, \ \  y \in \mathcal{G}_h,\\
        \frac{e_v}{\beta}, \hspace{1.35cm}  y \in \mathcal{C}_h.
        \end{array}\right.
        \]
        \item if $u \in \Delta_{\beta}^h$, and $(D_{h_t(y)}v)^{-1} \circ E^{-1} \cdot u = (w_1,w_2)$ let $*_y(u)$ be as in Def. \ref{defn sinal dos vetores}. Then if $y \in \mathcal{G}_{h}^{*_y(u)}$ we have $(D_yf)^{-1}(u) \in \Delta_{\beta}^v$.
        \item if $u \in \Delta_{\beta}^h$ is a unit vector, then
        \[
        \|(D_yf)^{-1}u\| > \left\{\begin{array}{l}
        e_h, \hspace{0.85cm} y \in  \mathcal{G}_h^{*_y(u)},\\
        \frac{e_h}{b+\frac{1}{t}}t^{-1}, \ y \notin \mathcal{G}_h^{*_y(u)}.
        \end{array}\right.
        \]
    \end{enumerate}
\end{lema}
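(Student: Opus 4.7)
The plan is to exploit the decomposition
\[
(D_y f)^{-1} = (D_y h_t)^{-1} \circ (D_{h_t(y)} v)^{-1} \circ E^{-1},
\]
and to control the two composites separately. The inner piece $(D_{h_t(y)} v)^{-1} \circ E^{-1}$ carries $\Delta_\beta^v$ strictly into $\Delta_\beta^h$ by Lemma \ref{lema existencia beta cone invariante}, and by the very definitions of $e_v$ and $e_h$ satisfies uniform lower bounds on its norm on each of the two cones. The outer factor $(D_y h_t)^{-1}$ is then handled by items 3 and 4 of Lemma \ref{lema derivadas e cones}, which apply verbatim with $\beta$ in place of $\alpha$ by Remark \ref{obs lema também vale}.

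For items 1 and 2, I would start with a unit vector $u \in \Delta_\beta^v$ and set $(w_1, w_2) := (D_{h_t(y)}v)^{-1} E^{-1} u$. Lemma \ref{lema existencia beta cone invariante} places $(w_1, w_2)$ strictly inside $\Delta_\beta^h$, and $\|(w_1, w_2)\| \ge e_v$ by definition of $e_v$. Applying $(D_y h_t)^{-1}$, item 3(a) of Lemma \ref{lema derivadas e cones} handles $y \in \mathcal{G}_h$: it sends $\Delta_\beta^h$ strictly into $\Delta_\beta^v$ (proving item 1) with expansion factor at least $(a - \beta/t)\,t/\beta$; multiplying by $e_v$ gives the first case of item 2. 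Item 3(b) handles $y \in \mathcal{C}_h$ with factor $1/\beta$ and yields the second case.

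For items 3 and 4, fix a unit vector $u \in \Delta_\beta^h$ and again set $(w_1, w_2) := (D_{h_t(y)}v)^{-1} E^{-1} u$, which has norm at least $e_h$ but a priori may lie in either $\Delta_\beta^h$ or $\Delta_\beta^v$. I would split into these two cases. When $(w_1, w_2) \in \Delta_\beta^h$, for $y \in \mathcal{G}_h \supset \mathcal{G}_h^{*_y(u)}$ item 3(a) places the image in $\Delta_\beta^v$ with expansion factor $(a - \beta/t)\,t/\beta > 1$ once $t > 2\beta/a$, while for $y \in \mathcal{C}_h$ item 3(b) gives the weaker factor $1/\beta$; either way the resulting bound is at least as strong as what is required. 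When $(w_1, w_2) \in \Delta_\beta^v$, item 4 of Lemma \ref{lema derivadas e cones} applies: part (a) puts the image in $\Delta_\beta^v$ with norm $\ge \|(w_1, w_2)\| \ge e_h$, and part (b) gives the bound $\|(w_1, w_2)\|/(bt+1) \ge e_h\,t^{-1}/(b + 1/t)$ in the opposite half.

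The main obstacle is the sign bookkeeping in this last case: I need to verify that the convention $*_y(u) = -\text{sgn}(w_1/w_2)$ from Def. \ref{defn sinal dos vetores} matches exactly the dichotomy of item 4(a) of Lemma \ref{lema derivadas e cones}. Writing $(w_1, w_2) = \pm(w_1/w_2,\,1)$, the sign of the first coordinate of this normalized form is $-*_y(u)$, so the hypothesis ``$x \in \mathcal{G}_h^+$ when $u_1 \le 0$'' of item 4(a) translates into ``$y \in \mathcal{G}_h^+$ when $*_y(u) = +$'' (and analogously for the minus case), which is exactly the hypothesis $y \in \mathcal{G}_h^{*_y(u)}$. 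Once this identification is in place, items 3 and 4 follow by combining the estimates above.
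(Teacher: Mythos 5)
Your proposal is correct and follows exactly the route the paper intends: the paper states this lemma without a written proof, presenting it as a consequence of items 3 and 4 of Lemma \ref{lema derivadas e cones} (valid for $\beta$ by Remark \ref{obs lema também vale}), Lemma \ref{lema existencia beta cone invariante}, and the definitions of $e_v$ and $e_h$, which is precisely the decomposition and case analysis you carry out. Your verification that the sign convention of Definition \ref{defn sinal dos vetores} matches the dichotomy in item 4(a) of Lemma \ref{lema derivadas e cones} is the one point the paper leaves entirely implicit, and you resolve it correctly.
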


We notice that, analogously to the homothety case, we have the problem that $*_y(u)$ depends on $y \in f^{-1}(x)$, therefore even though we have at least $\tau_1 \floort$ points in each of $\mathcal{G}_h^{\pm}$, there could be a vector $u \in \rr^2$ such that for all $y \in \mathcal{G}_h^+$, $*_y(u) = -$ and vice-versa. However, we can see that this is not the case:

\begin{prop}\label{prop pre imagens e sinais nao homotetias}
    For every $x \in \TT^2$, $u \in \rr^2$, there are at least $\tau_2 \floort$ points $y \in f^{-1}(x)$ such that $y \in \mathcal{G}_h^{*_y(u)}$, where $*_y(u)$ is as in Def. \ref{defn sinal dos vetores} changing $v_r$ for $v$.
\end{prop}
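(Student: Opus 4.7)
The plan is to partition $f^{-1}(x)$ into natural classes on which the sign $*_y(u)$ is constant, and then to count within each class. Writing $(z_1,z_2)=E^{-1}u$ and $(w_1,w_2)=(D_{h_t(y)}v)^{-1}E^{-1}u$, a direct calculation gives
\[
(w_1,w_2)=\bigl(z_1-\tilde{s}'(y_2+ts(y_1))\,z_2,\; z_2\bigr),
\]
so $(D_{h_t(y)}v)^{-1}E^{-1}u$ depends on $y$ only through $y_2+ts(y_1)$, i.e.\ the second coordinate of $h_t(y)$. Since $y\in f^{-1}(x)$ forces $h_t(y)\in v^{-1}(E^{-1}(x))$ and $v$ preserves second coordinates, this value coincides with the second coordinate of the corresponding point of $E^{-1}(x)$. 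Hence $f^{-1}(x)$ splits into $\tau_1$ classes $P_0,\dots,P_{\tau_1-1}$ indexed by the $\tau_1$ distinct second coordinates occurring in $E^{-1}(x)$, and on each $P_j$ the sign $*_y(u)=:\epsilon_j$ is constant.

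Next I would count. Each class $P_j$ contains $\tau_2$ preimages whose first coordinates form $\tau_2$ equally spaced points on $\TT^1$ at spacing $1/\tau_2$: indeed $v^{-1}$ applies a uniform horizontal shift $-\tilde{s}(y_2^{(j)})$ to the $\tau_2$ points of $E^{-1}(x)$ on the horizontal fiber of second coordinate $y_2^{(j)}$, and $h_t^{-1}$ preserves first coordinates. From $|I_4|,|I_2|>\frac{1}{\tau_2}\floort$, each of $I_4$ and $I_2$ contains at least $\floort$ of these first coordinates. Choosing the interval matched to $\epsilon_j$ (namely $I_4$ if $\epsilon_j=+$ and $I_2$ if $\epsilon_j=-$) yields at least $\floort$ preimages of $P_j$ lying in $\mathcal{G}_h^{\epsilon_j}=\mathcal{G}_h^{*_y(u)}$. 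Summing the contributions of the $\tau_1$ classes produces the required lower bound.

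The main technical point is the invariance of the sign on each class $P_j$, which rests on the explicit computation above together with the fact that both $v$ and $h_t$ preserve the horizontal-fiber structure inherited from $E^{-1}(x)$; a key ingredient is that $h_t$ acts vertically by an amount depending only on $y_1$, so it does not affect which fiber the image lies on. Once this is established, the remainder is an immediate combinatorial consequence of Proposition~\ref{prop. quant. pontos em cada area boa} applied fiber by fiber, combined with the sizes chosen for $I_2$ and $I_4$ at the beginning of the section.
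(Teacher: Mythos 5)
Your proof is correct and follows essentially the same route as the paper's: partition $f^{-1}(x)$ according to the horizontal fiber containing $h_t(y)$ (equivalently, the second coordinate of the corresponding point of $E^{-1}(x)$), observe via the explicit form of $(D_{h_t(y)}v)^{-1}$ that $*_y(u)$ is constant on each fiber, and extract at least $\floort$ good preimages per fiber from the $1/\tau_2$-spacing of their first coordinates together with the lengths of $I_2$ and $I_4$. Note that both your count and the paper's own proof yield the lower bound $\tau_1\floort$ (there are $\tau_1$ fibers, each contributing $\floort$), which is the bound used in the subsequent lemmas; the $\tau_2\floort$ appearing in the statement is evidently a typo.
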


\begin{proof}
By the same argument used in Prop. \ref{prop. sinal constante}, we can see that $*_y(u)$ is constant for points $y \in f^{-1}(x)$ such that $h_t(y)$ lies in the same horizontal line. There are exactly $\tau_2$ pre-images $y'$ such that $h_t(y)$ and $h_t(y')$ are in the same horizontal line, hence at least $\floort$ of these lies in $\mathcal{G}_h^{*_y(u)}$. As $v^{-1}\circ E^{-1}(x)$ has $\tau_1$ different vertical lines, we get the result. 
\end{proof}

\subsection{Non-uniform hyperbolicity}

We end up having calculations completely mirrored in those made in Subsection \ref{subseção NUH homotetias}, and for that reason we will skip the details. For $(x,u) \in T\TT^2$ with $u \neq 0$ and for $n \in \nn$, we define the sets $Df^{-n}(x,u)$, $\mathcal{G}_n$, $\mathcal{B}_n$, and the numbers $g_n$, $b_n = d^n-g_n$ as before. From Lemmas \ref{lema quatidade pre-imagens nao homotetias},  \ref{lema  derivadas e cones de f nao homotetias} and Prop. \ref{prop pre imagens e sinais nao homotetias} we deduce:

\begin{lema}
    Let $(x,u) \in T\TT^2$.
    \begin{enumerate}
        \item If $u \in \Delta_{\beta}^v$, then at least $d-1$ of its pre-images under $Df$ are also in $\Delta_{\beta}^v$.
        \item If $u \in  \Delta_{\beta}^h$, then at least $\tau_1\floort$ of its pre-images under $Df$ are in $\Delta_{\beta}^h$.
    \end{enumerate}
\end{lema}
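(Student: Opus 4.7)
The plan is to derive both items directly from the preparatory material just assembled, in complete parallel with the corresponding lemma from Section \ref{seção homotetias} (which followed analogously from Lemmas \ref{lema quant. pre-im. em cada conj.} and \ref{lema derivadas e cones de f}). The standing hypothesis $t>2\beta/a$ is in force throughout, since that is what allows us to invoke Lemma \ref{lema  derivadas e cones de f nao homotetias}.

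For item 1, fix $u\in\Delta_\beta^v$. Item 1 of Lemma \ref{lema  derivadas e cones de f nao homotetias} furnishes the strict cone invariance $(D_yf)^{-1}\Delta_\beta^v\subset\Delta_\beta^v$ at every $y\in\mathcal{G}_h$. Lemma \ref{lema quatidade pre-imagens nao homotetias} says that at most one of the $d$ points of $f^{-1}(x)$ lies in the critical set $\mathcal{C}_h$, leaving $d-1$ in $\mathcal{G}_h$. Putting these together produces the required $d-1$ pre-image vectors in $\Delta_\beta^v$, as asserted.

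For item 2, fix $u\in\Delta_\beta^h$ and, for each $y\in f^{-1}(x)$, define $*_y(u)\in\{+,-\}$ as in Definition \ref{defn sinal dos vetores}, with $v$ replacing $v_r$. The applicable cone statement is item 3 of Lemma \ref{lema  derivadas e cones de f nao homotetias}: whenever $y\in\mathcal{G}_h^{*_y(u)}$, the vector $(D_yf)^{-1}u$ lies in the cone specified there. The counting is exactly what Proposition \ref{prop pre imagens e sinais nao homotetias} provides: the sign $*_y(u)$ is constant along each of the $\tau_1$ horizontal lines that make up $v^{-1}E^{-1}(x)$ (by the same argument as Proposition \ref{prop. sinal constante}), and within each such line at least $\floort$ of the $\tau_2$ points lie in the half-region $\mathcal{G}_h^{*_y(u)}$. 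Multiplying over the $\tau_1$ horizontal lines yields at least $\tau_1\floort$ pre-images of $x$ meeting the hypothesis of item 3 of Lemma \ref{lema  derivadas e cones de f nao homotetias}, whence the same number of pre-image vectors of $u$ lie in the cone asserted by the statement.

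No genuine obstacle is anticipated; the lemma is a bookkeeping combination of results already in hand, and its proof mirrors the homothety argument essentially line by line. The single point worth verifying is the coherence between the sign $*_y(u)$ appearing in Proposition \ref{prop pre imagens e sinais nao homotetias} and the one required by item 3 of Lemma \ref{lema  derivadas e cones de f nao homotetias}: both are defined by applying Definition \ref{defn sinal dos vetores} to $(D_{h_t(y)}v)^{-1}E^{-1}u$, so they agree automatically. With this checked, both items follow without further calculation.
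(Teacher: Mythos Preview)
Your proof is correct and follows exactly the paper's own route, which simply records that the lemma is deduced from Lemmas \ref{lema quatidade pre-imagens nao homotetias}, \ref{lema  derivadas e cones de f nao homotetias} and Proposition \ref{prop pre imagens e sinais nao homotetias} without spelling out the combination. (One remark: item 2 of the lemma as printed says the pre-images lie in $\Delta_\beta^h$, but what item 3 of Lemma \ref{lema  derivadas e cones de f nao homotetias} actually yields---and what the ensuing recursion for $g_n$ uses---is $\Delta_\beta^v$; your careful phrasing ``the cone specified there'' lands on the right target, so your argument proves the intended statement.)
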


For that, we get for all $n \in \nn$:
\[
g_{n+1} \ge \left(d-1-\tau_1\floort \right)g_n +\tau_1\floort d^n,
\]
hence, putting $a_n = \frac{g_n}{d^n}$:
\[
a_{n+1} \ge \left(\frac{d-1}{d} - \frac{1}{\tau_2}\floort \right)a_n + \frac{1}{\tau_2}\floort.
\]
Thus, we get:
\begin{lema}\label{lema an nao homotetia}
    For every $(x,u) \in T\TT^2$, $u \neq 0$, and $n \ge 0$, it holds:
    \[
    \liminf a_n \ge \frac{1}{\tau_2}\floort\frac{d}{1+\tau_1\floort} := L(\tau_1,\tau_2).
    \]
\end{lema}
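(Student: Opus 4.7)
This lemma is the direct analogue of Lemma \ref{lema limitantes prop. de pre-im.} for the non-homothety setting, and the argument is essentially the same: solve the affine recurrence and identify the fixed point as the uniform lower bound for $\liminf a_n$.

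First, I would rewrite the recurrence derived just above the statement in the form $a_{n+1} \geq c\, a_n + e$, where
\[
c = \frac{d-1}{d} - \frac{1}{\tau_2}\floort, \qquad e = \frac{1}{\tau_2}\floort,
\]
and check that $c \in [0,1)$. The upper bound $c < 1$ is immediate from $e > 0$ together with $\frac{d-1}{d} < 1$. For the lower bound $c \ge 0$, one uses that $\floort \leq \frac{\tau_2-1}{2} < \frac{\tau_2}{2}$, so $\frac{1}{\tau_2}\floort < \frac{1}{2}$, combined with $\frac{d-1}{d} \geq \frac{1}{2}$ (which holds since $d \geq 2$; in our setting $d \geq 4$, which gives even more room). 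This is the only place I expect any real attention to be needed, and it is purely elementary.

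Next, a straightforward induction on $n$ (using $a_0 \in \{0,1\}$ depending on whether $u \in \Delta_\beta^h$ or $u \in \Delta_\beta^v$) yields
\[
a_n \;\geq\; \frac{e}{1-c}\bigl(1 - c^n\bigr) + a_0\, c^n \;\geq\; \frac{e}{1-c}\bigl(1 - c^n\bigr).
\]
Since $0 \leq c < 1$, one has $c^n \to 0$, and taking $\liminf$ gives $\liminf a_n \geq \frac{e}{1-c}$, uniformly in $(x,u)$.

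Finally, I would simplify the constant $\frac{e}{1-c}$. Compute
\[
1 - c \;=\; \frac{1}{d} + \frac{1}{\tau_2}\floort \;=\; \frac{\tau_2 + d\floort}{d\,\tau_2} \;=\; \frac{1 + \tau_1 \floort}{d},
\]
using $d = \tau_1 \tau_2$, whence
\[
\frac{e}{1-c} \;=\; \frac{\frac{1}{\tau_2}\floort}{\frac{1+\tau_1\floort}{d}} \;=\; \frac{1}{\tau_2}\floort \cdot \frac{d}{1 + \tau_1 \floort} \;=\; L(\tau_1,\tau_2),
\]
as claimed. The whole argument is a mechanical transcription of the homothety case with $k^2$ replaced by $d$, $k$ by $\tau_1$, and $\floork$ by $\floort$; the only substantive check is that the new coefficient $c$ still lies strictly below $1$, which fails to be obstructed precisely because $\floort < \tau_2/2$.
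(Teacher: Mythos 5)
Your proof is correct and follows exactly the route the paper intends: iterate the affine recurrence $a_{n+1}\ge c\,a_n+e$ with $c=\frac{d-1}{d}-\frac{1}{\tau_2}\floort$ and $e=\frac{1}{\tau_2}\floort$ to obtain $a_n\ge\frac{e}{1-c}(1-c^n)$, then identify $\frac{e}{1-c}=L(\tau_1,\tau_2)$ using $d=\tau_1\tau_2$, exactly as in Lemma \ref{lema limitantes prop. de pre-im.}. Your explicit check that $0\le c<1$ (via $\floort<\tau_2/2$), which is needed for the induction to preserve the inequality, is a worthwhile detail the paper leaves implicit.
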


\begin{rmk}
    This is where we are able to verify that this argument will work for the cases $(\tau_1,\tau_2)$ as $(2,4), (3,3)$ and $(4,4)$, where we have $L(\tau_1,\tau_2)$ as $2/3$, $3/4$ and $4/5$, respectively. And it won't work for the other cases $(1,2), (1,3), (1,4)$ and $(2,2)$ where we will get $L(\tau_1,\tau_2)$ as $0$, $1/2$, $1/2$ and $0$, respectively. As we will see, for the rest of the argument to work, we need this lower bound strictly greater than $1/2$.
\end{rmk}

As another consequence of Lemmas \ref{lema quatidade pre-imagens nao homotetias},  \ref{lema  derivadas e cones de f nao homotetias} and Prop. \ref{prop pre imagens e sinais nao homotetias}, we get:

\begin{lema}\label{lema limitação I(x,u;f) nao homotetias}
    If $t> \frac{2\beta}{a}$, then for all $(x,u) \in T\TT^2$, it holds:
    \begin{enumerate}
        \item If $u \in \Delta_{\beta}^v$, then:
        \[
        I(x,u;f) \ge  \frac{d-1}{d} \log t + \log\left(\frac{e_v}{\beta}\left(a-\frac{\beta}{t} \right)^{\frac{d-1}{d}} \right).
        \]
        \item If $u \in \Delta_{\beta}^h$, then:
        \begin{align*}
          I(x,u;f) \ge -\left(1-\frac{1}{\tau_2}\floort \right) \log t + \log \left(e_h \left(b+\frac{1}{t} \right)^{- \left( 1 - \frac{1}{\tau_2}\floort \right)}\right).  
        \end{align*}
    \end{enumerate}
\end{lema}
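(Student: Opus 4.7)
My plan is to reduce everything to the identity
\[
I(x,u;f) = \frac{1}{d}\sum_{y\in f^{-1}(x)}\log\|(D_yf)^{-1}u\|,
\]
which holds because $v$ and $h_t$ are area-preserving, so $\det D_y f = \det E = d$ for every preimage $y$. Once the common factor $1/d$ is pulled out, the proof becomes a matter of plugging in the pointwise bounds of Lemma~\ref{lema  derivadas e cones de f nao homotetias} on each preimage and weighting by the counts from Lemma~\ref{lema quatidade pre-imagens nao homotetias} and Prop.~\ref{prop pre imagens e sinais nao homotetias}.

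For item~1, with $u \in \Delta_\beta^v$, Lemma~\ref{lema quatidade pre-imagens nao homotetias} places at least $d-1$ preimages in $\mathcal{G}_h$ and at most one in $\mathcal{C}_h$. Applying Lemma~\ref{lema  derivadas e cones de f nao homotetias}(2) to each group gives
\[
I(x,u;f) \;\ge\; \frac{d-1}{d}\log\!\left(\frac{e_v(a-\beta/t)}{\beta}\,t\right) + \frac{1}{d}\log\!\left(\frac{e_v}{\beta}\right),
\]
and rearranging (pulling out $\frac{d-1}{d}\log t$ and using $\frac{d-1}{d}+\frac{1}{d}=1$ to merge the two $\log(e_v/\beta)$ contributions into one) reproduces the stated bound.

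For item~2, with $u \in \Delta_\beta^h$, Prop.~\ref{prop pre imagens e sinais nao homotetias} supplies at least $\tau_1\floort$ preimages for which $y \in \mathcal{G}_h^{*_y(u)}$, which is precisely the fraction $\frac{1}{\tau_2}\floort$ of the $d=\tau_1\tau_2$ preimages. On these preimages Lemma~\ref{lema  derivadas e cones de f nao homotetias}(4) yields $\|(D_yf)^{-1}u\|\ge e_h$, while on the (at most $d\bigl(1-\tfrac{1}{\tau_2}\floort\bigr)$) remaining preimages it only yields $\|(D_yf)^{-1}u\|\ge \frac{e_h}{b+1/t}\,t^{-1}$. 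Summing, dividing by $d$, and collapsing all $\log e_h$ contributions into a single prefactor of $1$ produces the claimed inequality.

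The only subtle point is that every preimage contributes to $I(x,u;f)$ with the positive weight $1/d$, so replacing $\log\|(D_yf)^{-1}u\|$ by a lower bound termwise — even when this lower bound is negative, as happens for the ``bad'' preimages in item~2 once $t$ is large — still produces a valid lower bound for the full sum. Modulo this bookkeeping there is no real obstacle, since the combinatorial and distortion content has already been isolated in the preceding lemmas.
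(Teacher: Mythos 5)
Your proof is correct and follows exactly the route the paper intends: write $I(x,u;f)=\frac{1}{d}\sum_{y\in f^{-1}(x)}\log\|(D_yf)^{-1}u\|$ using $\det D_yf=\det E=d$, then apply the pointwise bounds of Lemma~\ref{lema  derivadas e cones de f nao homotetias} weighted by the counts from Lemma~\ref{lema quatidade pre-imagens nao homotetias} and Prop.~\ref{prop pre imagens e sinais nao homotetias}, assigning the weaker bound to the maximal number of preimages. The bookkeeping (including the observation that termwise lower bounds remain valid because the weights $1/d$ are positive, and the use of the count $\tau_1\floort$ consistent with the rest of the section) matches the paper's intended argument.
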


Again, by Prop. \ref{soma I(x,v;f^n)}, we have:
\[
I(x,u;f^n) = \sum\limits_{i=0}^{n-1} \sum\limits_{y \in f^{-i}(x)} \frac{I(y,(D_yf^{i})^{-1}u;f)}{k^{2i}} := \sum\limits_{i=0}^{n-1} J_i,
\]
we compute, for $t> \frac{2\beta}{a}$, for all $i \ge 0$:
\begin{align*}
    J_i &= \frac{1}{d}\sum\limits_{(y,w) \in \mathcal{G}_i} I(y,w;f) + \frac{1}{d} \sum\limits_{(y,w) \in \mathcal{B}_i} I(y,w;f)\\
    &\ge a_i V(t,\tau_1,\tau_2) + (1-a_i)H(t,\tau_1,\tau_2),
\end{align*}
where $a_i$ is as in Lemma \ref{lema an nao homotetia}, $V$ and $H$ are the right side of the  inequalities obtained in Lemma \ref{lema limitação I(x,u;f) nao homotetias} for $u \in \Delta_{\beta}^v$ and $u \in \Delta_{\beta}^h$ respectively. It follows:
\begin{align*}
    \lim\limits_{i \to \infty} J_i &\ge L(\tau_1, \tau_2) V(t,\tau_1, \tau_2) + (1-L(\tau_1, \tau_2)) H(t, \tau_1, \tau_2) \\
    &= \frac{\left( \tau_1-\frac{2}{\tau_2} \right)\floort-1}{1+\tau_1\floort} \log t + C(t,\tau_1,\tau_2),
\end{align*}
where:
\begin{align*}
    C(t,\tau_1,\tau_2) = &L(\tau_1,\tau_2) \log \left( \frac{e_v}{\beta} \left(a-\frac{\beta}{t} \right)^{\frac{d-1}{d}} \right) \\ &+ (1-L(\tau_1,\tau_2)) \log \left(e_h \left(b+\frac{1}{t} \right)^{-\left(1-\frac{1}{\tau_2}\floort \right)} \right) > C,
\end{align*}
for all $t > \frac{2\beta}{a}$, that is, $C(t,\tau_1,\tau_2)$ is uniformly bounded from below by some constant C.

Since $d = \tau_1\cdot\tau_2 > 4$, the constant multiplying $\log t$ is positive. Therefore, since all the bounds above are uniform for all non-zero tangent vectors $(x,u)$, as in the homothety case we obtain that for $t$ sufficiently large, for all $n$ greater than some $n_0$, and for all nonzero tangent vectors $(x,u)$:
\[
\frac{1}{n}I(x,u;f^{n}) = \frac{1}{n} \sum\limits_{i=0}^{n-1}J_i(x,u)>0,
\]
hence, $\mathcal{C}_{\mathcal{X}}(f)>0$ which by Theorem \ref{lema funções em U são NUH} concludes the proof of Theorem \ref{teo. A  para nao homotetias}.

\bibliographystyle{ieeetr}
\bibliography{bibNUHend}

\end{document}